\documentclass[12pt]{amsart}
\usepackage{amsmath}
\usepackage{amsthm}
\usepackage{amssymb}
\usepackage{amsfonts}
\usepackage{bm}
\usepackage[shortlabels]{enumitem}
\usepackage[bookmarks=true,hyperindex,pdftex,colorlinks,citecolor=red,linkcolor=blue]{hyperref}
\usepackage{centernot}
\usepackage{marginnote}
\usepackage{bbm}
\usepackage[all]{xy}
\usepackage{tikz}
\usepackage{comment}
\usetikzlibrary{arrows}
\usetikzlibrary{shapes,decorations}
\usetikzlibrary{positioning}
\usepackage{xcolor}
\usepackage{bigints}
\usepackage{cancel}
\usepackage[
a4paper,
left=2.4cm,
right=2.4cm,
top=2.3cm,
bottom=2.5cm,
headheight=14pt,
includeheadfoot
]{geometry}

\theoremstyle{plain}
\newtheorem{thm}{Theorem}[section]
\newtheorem{prop}[thm]{Proposition}
\newtheorem{lemma}[thm]{Lemma}
\newtheorem{cor}[thm]{Corollary}

\theoremstyle{remark}
\newtheorem{remark}[thm]{Remark}
\newtheorem{example}[thm]{Example}

\theoremstyle{definition}

\newtheorem{q}[thm]{Question}

\newcommand{\N}{\mathbb{N}}
\newcommand{\R}{\mathbb{R}}
\newcommand{\Z}{\mathbb{Z}}

\newcommand{\eps}{\varepsilon}

\definecolor{darkgreen}{rgb}{.2,.6,.2}
\title{Self-improvement for absolutely Ces\`aro bounded operators}
\author[L. Arnold]{Loris Arnold}

\address[L. Arnold]{}
\email{lfj.arld@gmail.com}
\keywords{$p$-absolute Ces\`aro boundedness, Banach space type and cotype, growth of powers, individual eventual
	positivity, weighted shifts, Katznelson--Tzafriri theorem}
\subjclass[2020]{47A35, 42A61}
\begin{document}
	
	\begin{abstract}
		We prove that every $p$-absolutely Ces\`aro bounded operator with
		constant $K>1$ is $q$-absolutely Ces\`aro bounded for
		$1\leq q<pK/(K-1)$. This range is optimal, even for positive
		weighted shifts. On Banach spaces of type $s\in (1,2]$, we prove that every absolutely
		Ces\`aro bounded operator is $q$-absolutely Ces\`aro bounded
		for every $1\leq q<s$, with a logarithmic estimate at $q=s$.
		We also prove the polynomial growth bound
		$\|T^n\|=O(n^{1/s-\eps})$ for some $\eps>0$.
		On $L^p$-spaces, $1<p<\infty$, we extend the implication
		from absolute Ces\`aro boundedness to $p$-absolute Ces\`aro
		boundedness from positive to individually eventually positive
		operators. Finally, for every $1\leq p<\infty$, we construct
		a $p$-absolutely Ces\`aro bounded operator $T$ on $\ell^1$
		such that $\sigma(T)\subseteq\mathbb D\cup\{1\}$ but
		$\limsup_{n\to\infty}\|T^n(I-T)\|=\infty$. We also observe that the construction yields a single operator
		which is $p$-absolutely Ces\`aro bounded for every $1\leq p<\infty$.
	\end{abstract}
	
	\maketitle

	\section{Introduction}
	
	Let $1\leq p<\infty$. A bounded operator $T$ on a Banach space $X$
	is called \emph{$p$-absolutely Ces\`aro bounded}, or $p$-ACB,
	if there exists $C>0$ such that, for every $x\in X$ and every
	$N\geq1$,
	\[
	\frac1N\sum_{k=0}^{N-1}\|T^kx\|^p\leq C\|x\|^p.
	\]
	We denote the least admissible constant by $C_{p,\mathrm{ac}}(T)$.
	When $p=1$, we simply say that $T$ is absolutely Ces\`aro bounded
	and write $C_{\mathrm{ac}}(T)=C_{1,\mathrm{ac}}(T)$.
	The argument $T$ is omitted when there is no ambiguity.
	By Jensen's inequality, $p$-ACB implies $q$-ACB for $1\leq q\leq p$.
	
	Absolute Ces\`aro boundedness was introduced by Luo and Hou
	\cite{HouLuo} and studied by Berm\'udez, Bonilla,
	M\"uller and Peris \cite{BermudezBonillaMullerPeris}.
	They constructed absolutely Ces\`aro bounded operators that are
	not power bounded and proved that
	$\|T^n\|=o(n)$ on arbitrary Banach spaces and
	$\|T^n\|=o(n^{1/2})$ on Hilbert spaces.
	Abadias and Bonilla \cite{AbadiasBonilla} extended the study to
	absolute Ces\`aro boundedness of fractional order and obtained
	growth and ergodic results. The case $p=2$, known as Ces\`aro square boundedness,
	had previously been studied by van Casteren
	\cite{VanCasteren1983}.
	Cohen, Cuny, Eisner and Lin introduced the general $p$-ACB
	condition and proved that it implies
	\[
	\|T^n\|=O(n^{1/p-\eps})
	\]
	for some $\eps>0$, see
	\cite[Proposition~3.1]{CohenCunyEisnerLin}.
	They also showed that every absolutely Ces\`aro bounded operator
	on a Hilbert space is $2$-ACB, and hence satisfies
	$\|T^n\|=O(n^{1/2-\eps})$ for some $\eps>0$;
	see \cite[Theorem~4.4]{CohenCunyEisnerLin}.
	
	Cuny \cite{CunyLp} related these estimates to the type and cotype
	of the underlying space. More precisely, if $X$ has type
	$s\in (1,2]$ with finite cotype $r \in [2,\infty)$, and $T$ is absolutely Ces\`aro
	bounded, then
	\[
	\|T^n\|
	=O\left(\frac{n^{1/s}}{(\log n)^{1/r}}\right).
	\]
	Our polynomial improvement of this estimate relies on a randomized
	orbit estimate of Cuny, which we recall in the preliminary section.
	
	For $1<p<\infty$, $L^p$-spaces have type $\min\{p,2\}$
	and cotype $\max\{p,2\}$.
	The implication from absolute Ces\`aro boundedness to $p$-ACB
	is therefore a natural extension of the Hilbert space result. For $1<p<2$, whether this implication holds was explicitly
	asked by Cuny \cite[Section~2]{CunyLp}.
	For $p>2$, this implication fails: for every $q>2$,
	Cuny's examples in \cite[Proposition~2.8]{CunyLp} provide
	$2$-absolutely Ces\`aro bounded operators on $L^p(\mathbb T)$
	which are not $q$-absolutely Ces\`aro bounded.
	Under positivity, however, Abbar, Arnold and Coine
	\cite[Theorem~2.1]{AbbarArnoldCoine} proved that absolute
	Ces\`aro boundedness implies $p$-ACB on $L^p(\Omega)$
	for every $1<p<\infty$.
	They also characterized $q$-absolute Ces\`aro boundedness
	of backward shifts on weighted $\ell^p$-spaces for $q\geq p$;
	see \cite[Theorem~3.4]{AbbarArnoldCoine}.
	
	Our first result is an optimal self-improvement theorem on
	arbitrary Banach spaces. If $T$ is $p$-ACB and
	$K=C_{p,\mathrm{ac}}(T)>1$, then $T$ is $q$-ACB for every
	\[
	1\leq q<\frac{pK}{K-1}.
	\]
	If $K=1$, then $T$ is a contraction.
	The upper endpoint cannot be increased or included, even for
	positive weighted shifts on $\ell^p$.
	In particular, every absolutely Ces\`aro bounded operator
	is $q$-ACB for some $q>1$.
	The proof combines an equivalent renorming with a distribution
	estimate for orbit norms; see
	Theorem~\ref{thmACBSelfImprovement}.
	
	We next obtain a weak endpoint estimate determined by type.
	If $X$ has type $s\in(1,2]$ and $T$ is absolutely
	Ces\`aro bounded, then, for every $x\in X$, $N\geq1$ and
	$\lambda>0$,
	\[
	\#\{0\leq k<N:\|T^kx\|>\lambda\}
	\leq CN\frac{\|x\|^s}{\lambda^s}.
	\]
	Consequently, $T$ is $q$-ACB for every $1\leq q<s$, and
	\[
	\frac1N\sum_{k=0}^{N-1}\|T^kx\|^s
	\leq C\log(N+1)\|x\|^s.
	\]
	These results are proved in Section~\ref{secWeakEndpoint}.
	In particular, they give all exponents $q<p$ for absolutely
	Ces\`aro bounded operators on $L^p(\Omega)$ when $1<p<2$.
	
	Section~\ref{secPolynomialGap} improves Cuny's logarithmic
	gain to a polynomial one. If $X$ has type $s\in(1,2]$ and $T$ is absolutely
	Ces\`aro bounded, we prove that
	\[
	\|T^n\|=O(n^{1/s-\eps})
	\]
	for some $\eps>0$.
	The argument combines the randomized orbit estimate of Cuny recalled
	in Proposition~\ref{prop:Cuny-randomized} with a finite-dimensional
	weighted shift associated with each orbit segment.
	For $L^p(\Omega)$, this gives
	$\|T^n\|=O(n^{1/\min\{p,2\}-\eps})$.
	
	In Section~\ref{secEventualPositivity}, we extend the positive
	endpoint theorem of Abbar, Arnold and Coine to
	\emph{individually eventually positive} operators:
	for each $f\geq0$, one only assumes that $T^nf\geq0$
	for all sufficiently large $n$, with the threshold depending
	on $f$.
	Under this assumption, absolute Ces\`aro boundedness on
	$L^p(\Omega)$ implies $p$-ACB for every $1<p<\infty$.
	The proof uses an eventual domination argument based on
	\cite[Theorem~2.1]{ArnoldIEP}.
	Together with \cite[Proposition~3.1]{CohenCunyEisnerLin},
	this yields $\|T^n\|=O(n^{1/p-\eps})$.
	
	Finally, we consider differences of consecutive powers.
	The Katznelson--Tzafriri theorem states that a power-bounded
	operator satisfying $\sigma(T)\cap\mathbb T\subseteq\{1\}$
	also satisfies $\|T^n(I-T)\|\to0$ as $n\to\infty$,
	see \cite{KatznelsonTzafriri}.
	In Proposition~\ref{prop:ACB-counterexample-KT}, we construct,
	for every $1\leq p<\infty$, a $p$-ACB operator on $\ell^1$
	such that
	\[
	C_{p,\mathrm{ac}}(T)\leq2,
	\qquad
	\sigma(T)\subseteq\mathbb D\cup\{1\},
	\qquad
	\limsup_{n\to\infty}\|T^n(I-T)\|=\infty.
	\]
	Taking $p=2$ answers negatively
	\cite[Section~6, Question~7]{CohenCunyEisnerLin}, which asks
	whether Ces\`aro square boundedness suffices.
	The uniform boundedness principle also rules out strong
	convergence, answering the accompanying question. 
	The construction uses the fact that, for $\delta\in[0,1]$,
	$(1-\delta)I+\delta T$ preserves $p$-absolute Ces\`aro
	boundedness without increasing its constant. A modification yields a single operator which is
	$p$-absolutely Ces\`aro bounded for every $1\leq p<\infty$
	and still satisfies the same spectral condition and divergence.

	We conclude with questions concerning the $L^p$ endpoint
	for $1<p<2$ and the Katznelson--Tzafriri conclusion under
	positivity.
	
	\subsection*{Notation and conventions}
	
	We write $\N_0=\{0,1,2,\ldots\}$,
	$\mathbb D=\{z\in\mathbb C:|z|<1\}$ and
	$\mathbb T=\{z\in\mathbb C:|z|=1\}$.

	Unless otherwise stated, $X$ is a real or complex Banach space
	and $T\in\mathcal L(X)$.
	Statements involving the spectrum are understood over the
	complex field. We write $p'=p/(p-1)$ for $1<p<\infty$.
	In Section~\ref{secEventualPositivity}, $L^p(\Omega)$ denotes
	the complex $L^p$-space over a measure space
	$(\Omega,\mathcal F,\mu)$.
	Except in Section~\ref{secWeakEndpoint}, where two independent
	randomizations are used, $\mathbb E$ denotes expectation
	with respect to the Rademacher variables under consideration.
	
	\section{Preliminaries}
	\label{secPreliminaries}
	
	\subsection{Rademacher estimates}
	
	Let $(\varepsilon_k)$ be a sequence of independent Rademacher variables. For
	$r\geq1$, we denote by $K_r$ an admissible Kahane--Khintchine constant such
	that
	\[
	\left(\mathbb E\left\|\sum_{k=1}^m\varepsilon_kx_k\right\|^r\right)^{1/r}
	\leq K_r\,\mathbb E\left\|\sum_{k=1}^m\varepsilon_kx_k\right\|
	\]
	for every Banach space $X$, every $m\geq1$ and every
	$x_1,\ldots,x_m\in X$. We shall only use this form of the
	Kahane--Khintchine inequalities; see \cite[Theorem~2.1]{CunyLp}.
	
	We also record precisely the two forms of Kahane's contraction principle which
	will be used below; see \cite[Proposition~2.2]{CunyLp}, which in turn follows
	from \cite[Theorem~6.1.13(ii)]{HytNeerVeraarWeisII}. We recall that an $X$-valued random variable is called $\mathbb R$-symmetric
	if it has the same distribution as its negative.
	
	\begin{prop}[Kahane's contraction principle]\label{propContraction}
		Let $\xi_1,\ldots,\xi_m$ be independent, integrable, and $\R$-symmetric
		$X$-valued random variables. If $\varnothing\neq I\subseteq J\subseteq\{1,\ldots,m\}$, then
		\begin{equation}\label{eq:contraction-subset}
			\mathbb E\left\|\sum_{k\in I}\xi_k\right\|
			\leq
			\mathbb E\left\|\sum_{k\in J}\xi_k\right\|.
		\end{equation}
		Moreover, if $(b_k)_{k\in I}$ is a family of nonzero scalars, then
		\begin{equation}\label{eq:contraction-weights}
			\mathbb E\left\|\sum_{k\in I}\xi_k\right\|
			\leq
			\frac{\pi}{2}\max_{k\in I}|b_k|^{-1}
			\mathbb E\left\|\sum_{k\in I}b_k\xi_k\right\|.
		\end{equation}
	\end{prop}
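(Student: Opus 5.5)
The plan is to prove the two inequalities in Proposition~\ref{propContraction} separately, both by symmetrization and convexity. The central observation is that, because the $\xi_k$ are independent and $\mathbb R$-symmetric, the joint law of $(\xi_k)_{k}$ coincides with that of $(\varepsilon_k\xi_k)_k$, where $(\varepsilon_k)$ is an independent Rademacher sequence. Hence every expectation in the statement can be computed as an iterated expectation: first over the $\varepsilon_k$ with the $\xi_k$ frozen, then over the $\xi_k$. This reduces both claims to statements about Rademacher sums with fixed vectors $x_k=\xi_k(\omega)$, which we then integrate in $\omega$ using Fubini (integrability guarantees that all expectations are finite).

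For \eqref{eq:contraction-subset}, fix the $x_k$. The key step is to write $\sum_{k\in I}\varepsilon_kx_k$ as a conditional expectation of $\sum_{k\in J}\varepsilon_kx_k$ given $(\varepsilon_k)_{k\in I}$. Since the variables $\varepsilon_k$ with $k\in J\setminus I$ are centered and independent of those indexed by $I$, this identity holds. Jensen's inequality for the convex function $\|\cdot\|$ then gives the estimate pointwise in $\omega$, and integrating over $\omega$ yields \eqref{eq:contraction-subset}. The same argument also works directly with the $\xi_k$, without introducing Rademachers: condition on $(\xi_k)_{k\in I}$, and use that the remaining variables are integrable, symmetric (hence mean zero) and independent.

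For \eqref{eq:contraction-weights}, set $y_k=b_kx_k$ and $a_k=b_k^{-1}$, so that $|a_k|\le M:=\max|b_k|^{-1}$. We must show
\[
\mathbb E\Bigl\|\sum a_k\varepsilon_k y_k\Bigr\|\le \tfrac{\pi}{2}M\,\mathbb E\Bigl\|\sum\varepsilon_ky_k\Bigr\|.
\]
In the real case, the map $a\mapsto \mathbb E\|\sum a_k\varepsilon_ky_k\|$ is convex on the cube $[-M,M]^I$, so its maximum is attained at an extreme point $a\in\{\pm M\}^I$. At such a point, symmetry of the $\varepsilon_k$ shows the value equals $M\,\mathbb E\|\sum\varepsilon_ky_k\|$, which gives the real estimate with constant $1$. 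In the complex case, write $a_k=|a_k|e^{i\theta_k}$. Replacing $\varepsilon_k$ by Steinhaus variables $e^{i\phi_k}$ absorbs the phases. The comparison between Rademacher and Steinhaus averages in one direction follows from the representation $\varepsilon_k=\mathrm{sign}(\cos\phi_k)$ together with $\mathbb E[\cos\phi\,|\,\mathrm{sign}\cos\phi]=\pm\tfrac{2}{\pi}$ and Jensen's inequality; in the other direction it follows from the real contraction principle applied to $\mathrm{Re}$ and $\mathrm{Im}$. Assembling these comparisons yields the factor $\pi/2$.

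The main obstacle is exactly this complex bookkeeping: the constants must combine to precisely $\pi/2$ rather than $\pi$. My first attempt would be to avoid splitting into real and imaginary parts altogether and to use the single conditional-expectation identity $\mathbb E[e^{i(\phi_k-\theta_k)}\mid\ldots]$ directly. Since the paper cites \cite[Proposition~2.2]{CunyLp} and \cite[Theorem~6.1.13(ii)]{HytNeerVeraarWeisII} for precisely this statement, it is also acceptable to invoke that result for the constant.
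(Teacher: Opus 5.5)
The paper gives no proof of this proposition: it cites \cite[Proposition~2.2]{CunyLp} and \cite[Theorem~6.1.13(ii)]{HytNeerVeraarWeisII}. Your proof of \eqref{eq:contraction-subset}, by conditioning on $(\xi_k)_{k\in I}$ and applying Jensen, is correct. So is your proof of \eqref{eq:contraction-weights} for real weights, by convexity on the cube with constant $1$. That already covers everything the paper uses: the only application of \eqref{eq:contraction-weights}, in the proof of Theorem~\ref{thmWeakEndpoint}, has positive real weights $b_n$.

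The gap is the complex case, and better bookkeeping will not close it: the Steinhaus route cannot give $\pi/2$. You can use $\mathbb E[\cos\phi_k\mid\operatorname{sign}\cos\phi_k]=\pm\frac{2}{\pi}$, or your variant with $\mathbb E[e^{i(\phi_k-\theta_k)}\mid\cdots]$. Either way, independent phases only give $\mathbb E\|\sum_k a_k\varepsilon_ky_k\|\le\frac{\pi}{2}M\,\mathbb E\|\sum_k s_ky_k\|$. To finish you would then need $\mathbb E\|\sum_k s_ky_k\|\le\mathbb E\|\sum_k\varepsilon_ky_k\|$, and this is false. In $X=\mathbb C$ with $y_1=y_2=1$, the left side is $\mathbb E|1+e^{i\phi}|=4/\pi$ and the right side is $1$. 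So this chain gives at best $2M$, and $\pi M$ with your real/imaginary splitting. Your sentence saying the comparisons assemble to $\pi/2$ is therefore unjustified.

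The missing idea is to use a \emph{single} phase shared by all $k$. Write $a_k=|a_k|e^{i\theta_k}$ and let $\phi$ be uniform on $[0,2\pi)$. Then $\mathbb E_\phi[e^{i\phi}\operatorname{sign}\cos(\phi-\theta_k)]=\frac{2}{\pi}e^{i\theta_k}$, hence
\[
\sum_k a_k\varepsilon_ky_k=\frac{\pi}{2}\,\mathbb E_\phi\Bigl[e^{i\phi}\sum_k c_k(\phi)\varepsilon_ky_k\Bigr],
\qquad
c_k(\phi)=|a_k|\operatorname{sign}\cos(\phi-\theta_k)\in[-M,M].
\]
The unimodular factor $e^{i\phi}$ is common to all terms, so it leaves the norm. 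Take norms, then $\mathbb E_\varepsilon$, and apply your real contraction principle for each fixed $\phi$. This gives exactly $\mathbb E\|\sum_k a_k\varepsilon_ky_k\|\le\frac{\pi}{2}M\,\mathbb E\|\sum_k\varepsilon_ky_k\|$.

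Falling back on the citation is what the paper does. But in that case the complex constant is not proved by your argument.
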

	
	Thus, in the form most frequently used below, terms may be removed from a
	Rademacher sum with constant $1$.
	
	\subsection{Type and cotype}
	
	Let $s\in[1,2]$. We say that $X$ has \emph{type $s$} if there exists
	$\tau_{s,X}>0$ such that
	\begin{equation}
		\label{eq:type}
		\left(\mathbb E\left\|\sum_{k=1}^m\varepsilon_kx_k\right\|^s\right)^{1/s}
		\leq \tau_{s,X}\left(\sum_{k=1}^m\|x_k\|^s\right)^{1/s}
	\end{equation}
	for every $m\geq1$ and every $x_1,\ldots,x_m\in X$. Likewise, for
	$r\in[2,\infty)$, $X$ has \emph{cotype $r$} if there exists $\gamma_{r,X}>0$ such
	that
	\[
	\left(\sum_{k=1}^m\|x_k\|^r\right)^{1/r}
	\leq \gamma_{r,X}
	\left(\mathbb E\left\|\sum_{k=1}^m\varepsilon_kx_k\right\|^r\right)^{1/r}
	\]
	for every $m\geq1$ and every $x_1,\ldots,x_m\in X$. We refer to
	\cite[Chapter~7]{HytNeerVeraarWeisII}. In particular, $L^p(\Omega)$ has type
	$\min\{p,2\}$ and cotype $\max\{p,2\}$ whenever $1<p<\infty$. Recall also that every Banach space of non-trivial type
	has finite cotype; see \cite[Remark~2.4]{CunyLp}.
	
	We shall also use the following standard consequence of finite cotype. If $X$
	has cotype $r<\infty$, there exists $\chi_{r,X}>0$ such that
	\begin{equation}\label{eq:independent-cotype}
		\left(\sum_{k=1}^m\mathbb E\|\zeta_k\|^r\right)^{1/r}
		\leq \chi_{r,X}
		\left(\mathbb E\left\|\sum_{k=1}^m\zeta_k\right\|^r\right)^{1/r}
	\end{equation}
	for every finite family of independent centered $X$-valued random variables
	$\zeta_1,\ldots,\zeta_m$ with finite $r$-th moments. This is
	\cite[Proposition~9.11]{LedouxTalagrand}; see also the discussion preceding
	\cite[Theorem~2.3]{CunyLp}.
	
	We conclude this section with an important estimate of Cuny
	\cite[Theorem~2.3, estimate~(9)]{CunyLp}, which will be a key
	ingredient in the sequel.
	
	\begin{prop}\label{prop:Cuny-randomized}
		Let $X$ be a Banach space of type $s\in (1,2]$, and let $T\in\mathcal L(X)$
		be absolutely Ces\`aro bounded. Then there exists a constant $C>0$
		such that
		\[
		\left(
		\mathbb E\left\|\sum_{k=0}^{N-1}\varepsilon_kT^kx\right\|^s
		\right)^{1/s}
		\leq C N^{1/s}\|x\|
		\]
		for every $x\in X$ and every $N\geq1$, where
		$(\varepsilon_k)_{k\geq0}$ is a sequence of independent Rademacher
		variables.
	\end{prop}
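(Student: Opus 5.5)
The plan is to prove the $L^1$ version first and then upgrade it. By the Kahane--Khintchine inequality with constant $K_s$, it suffices to find $C>0$ such that
\[
\mathbb E\left\|\sum_{k=0}^{N-1}\varepsilon_kT^kx\right\|\leq CN^{1/s}\|x\|
\]
for every $x\in X$ and every $N\geq1$. Fix $x$ with $\|x\|=1$ and $N\geq1$. The only information available about the orbit is absolute Ces\`aro boundedness. By Chebyshev's inequality it gives the level-set bound
\[
\#\{0\leq k<N:\|T^kx\|>\lambda\}\leq C_{\mathrm{ac}}N/\lambda .
\]
It also gives the a priori growth $\|T^n\|=o(n)$ from \cite{BermudezBonillaMullerPeris}. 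I would split the index set dyadically into
\[
B_0=\{k<N:\|T^kx\|\leq 1\}
\quad\text{and}\quad
B_i=\{k<N:2^{i-1}<\|T^kx\|\leq 2^i\}\quad (i\geq1).
\]
Then $\#B_i\leq 2C_{\mathrm{ac}}N2^{-i}$, and only $i\lesssim\log N$ occur. Rademacher sums over disjoint index sets can be handled separately: by the triangle inequality the total is at most the sum over $i$ of $\mathbb E\|\sum_{k\in B_i}\varepsilon_kT^kx\|$. For $B_0$, the type $s$ inequality \eqref{eq:type} gives the bound $\tau_{s,X}N^{1/s}$ at once.

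For $i\geq1$, applying type naively gives $\tau_{s,X}2^i(2C_{\mathrm{ac}}N2^{-i})^{1/s}$. This is $N^{1/s}$ times $2^{i(1-1/s)}$, which grows in $i$ and cannot be summed. The main obstacle is therefore the large levels: one has to show that indices where the orbit is large contribute less than type alone predicts. What I would try first is to exploit ACB at the intermediate points $T^mx$ rather than only at $x$. Take $k\in B_i$ and some $m<k$ with $\|T^mx\|$ small, which exists for many $m$ because the large set is sparse. Write $T^kx=T^{k-m}(T^mx)$. Then average the Rademacher sum over a block of shifts $m$. This expresses the sum over $B_i$ as an average of sums of the form $\sum_j\varepsilon_jT^j y_m$ with $\|y_m\|$ controlled. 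Each such sum is an orbit sum of the same type but starting from a point of norm $O(1)$ and running over a set of size $\#B_i\approx N2^{-i}$.

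The comparison between the original sum and these averaged sums, including the removal of extra terms and the replacement of coefficients, is where Kahane's contraction principle (Proposition~\ref{propContraction}) enters. The subset form \eqref{eq:contraction-subset} lets one discard terms, and the weighted form \eqref{eq:contraction-weights} removes the averaging weights, which are bounded below on the relevant indices. This should yield a recursive inequality. Let $R(N)=\sup_{\|x\|\leq1}\mathbb E\|\sum_{k<N}\varepsilon_kT^kx\|$. The aim is a bound of the form
\[
R(N)\leq C N^{1/s}+\sum_{i\geq1}c_i R(N2^{-i}),
\]
with $\sum_i c_i2^{-i/s}<1$. An induction on $N$ would then give $R(N)\leq C'N^{1/s}$.

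If the shifting argument produces only a logarithmic loss, I would fall back on a different route. The finite cotype of $X$, which holds since $X$ has non-trivial type, yields the independent-sum inequality \eqref{eq:independent-cotype}. One can apply it to the independent, centered block sums over the $B_i$ to run the argument in an $\ell^r$-sum over levels rather than an $\ell^1$-sum. Interpolating this against the type bound should again make the dyadic series converge. The final step is to return from the first moment to the $s$-th moment via $K_s$, which gives the constant $C=K_sC'$ in Proposition~\ref{prop:Cuny-randomized}.
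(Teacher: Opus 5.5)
\textbf{There is a genuine gap.} It sits exactly at the step you flag as the main obstacle. The recursion $R(N)\le CN^{1/s}+\sum_{i\ge1}c_iR(N2^{-i})$ is asserted, not derived, and the shifting argument cannot produce it.

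\emph{The shift does not shorten the orbit segment.} For a fixed $m\le\min B_i$, writing $T^kx=T^{k-m}(T^mx)$ turns the level-$i$ sum into $\sum_{k\in B_i}\varepsilon_kT^{k-m}(T^mx)$. Its index set $B_i-m$ has cardinality $\#B_i$, but it is spread over an interval of length comparable to $N$. Since $R$ only controls initial orbit segments, \eqref{eq:contraction-subset} only lets you enlarge $B_i-m$ to $\{0,\dots,N-1-m\}$. Averaging over a set $A$ of such $m$ then gives the circular bound $R(N)\,|A|^{-1}\sum_{m\in A}\|T^mx\|$.

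\emph{Redefining $R$ does not help.} If you take $R(n)$ to be a supremum over arbitrary index sets of cardinality $n$, then already for $n=1$ it equals $\sup_k\|T^k\|$. This is infinite for absolutely Ces\`aro bounded operators that are not power bounded, so the induction has no base.

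\emph{Splitting by level cannot close the argument.} Your scheme uses absolute Ces\`aro boundedness only through the norms of the deterministic vectors $T^mx$. That is the same $\ell^1$ information that produced the divergent level bounds. Even the optimal weak-type bound $\#B_i\le CN2^{-is}$ of Theorem~\ref{thmWeakEndpoint} gives $N^{1/s}$ per level, hence only $N^{1/s}\log N$ in total.

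\emph{The cotype fallback goes the wrong way.} \eqref{eq:independent-cotype} bounds the pieces by the whole, so it gives \emph{lower} bounds for $\mathbb E\|\sum_iZ_i\|$. This is how it is used in Lemma~\ref{lemRademacherDisjointCotype} and Theorem~\ref{thmPolynomialGap}. It never gives an upper bound for a sum over levels.

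\textbf{The missing idea.} The paper does not reprove this statement; it quotes it from Cuny. However, the argument in the proof of Theorem~\ref{thmWeakEndpoint}, taken with $\delta=1$, proves it. It decomposes in time rather than by level, and puts the normalisation inside a random starting vector.

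For $M=2^m$ with $m\ge1$ and $\|x\|=1$, let $y_M=\sum_{k=1}^M\varepsilon_kT^kx/(1+\|T^kx\|)$. Every term has norm at most one, so type gives $\mathbb E\|y_M\|\le\tau_{s,X}M^{1/s}$.

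By stationarity, $\widetilde T\widetilde U$ is absolutely Ces\`aro bounded on $L^1(\Sigma;X)$ with constant $C_{\mathrm{ac}}$. Hence $u_M=\sum_{j<M}\widetilde T^{\,j}\widetilde U^{\,j}y_M$ satisfies $\mathbb E\|u_M\|\le C_{\mathrm{ac}}\tau_{s,X}M^{1+1/s}$.

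The coefficient of $\varepsilon_nT^nx$ in $u_M$ is $b_n=\sum_{k=1}^n(1+\|T^kx\|)^{-1}$. Absolute Ces\`aro boundedness together with Cauchy--Schwarz gives $b_n\ge\kappa M$ for $M/2\le n<M$. This is where the $\ell^1$ information enters, through the reciprocals of the orbit norms.

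Your instinct to remove weights that are bounded below via \eqref{eq:contraction-subset} and \eqref{eq:contraction-weights} now applies. It yields $\mathbb E\|\sum_{n=M/2}^{M-1}\varepsilon_nT^nx\|\le CM^{1/s}\|x\|$. Summing over dyadic time blocks gives a geometric series bounded by $CN^{1/s}\|x\|$, and your final Kahane--Khintchine step completes the proof.
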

	\section{Self-improvement beyond the initial exponent}
	
	We first establish a self-improvement property which holds on arbitrary
	Banach spaces. The range of exponents depends on the absolute
	Ces\`aro bound and is optimal, even for positive operators on
	$\ell^p$.
	
	\begin{thm}\label{thmACBSelfImprovement}
		Let $X$ be a nonzero Banach space, let $1\leq p<\infty$, and let
		$T\in\mathcal L(X)$ be $p$-absolutely Ces\`aro bounded.
		If $C_{p,\mathrm{ac}}=1$, then $T$ is a contraction.
		If $C_{p,\mathrm{ac}}>1$, then $T$ is $q$-absolutely Ces\`aro bounded for every
		\[
		1\leq q<\frac{pC_{p,\mathrm{ac}}}{C_{p,\mathrm{ac}}-1}.
		\]
	\end{thm}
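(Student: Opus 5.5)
The plan is to reduce everything to a combinatorial statement about the scalar sequence $a_k=\|T^kx\|^p$ and prove a weak-type (distributional) bound for it. Write $K=C_{p,\mathrm{ac}}$. The case $K=1$ is immediate: taking $N=2$ in the definition gives $\tfrac12(\|x\|^p+\|Tx\|^p)\leq\|x\|^p$, so $\|Tx\|\leq\|x\|$ and $T$ is a contraction. For $K>1$, fix $x$ with $\|x\|=1$ and apply the hypothesis to $T^jx$. This yields the shift-invariant family of inequalities
\[
\sum_{k=j}^{j+M-1}a_k\leq KM\,a_j\qquad(j\geq0,\ M\geq1),
\]
with $a_0=1$. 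Everything below uses only this family.

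The key step is a level-set estimate. Fix $N$ and $\lambda\geq1$, and let $\Phi(\lambda)=\#\{0\leq k<N:a_k>\lambda\}$. Since $a_0=1\leq\lambda$, the set $\{k<N:a_k>\lambda\}$ splits into maximal runs $[j+1,j+m]$, each preceded by an index $j$ with $a_j\leq\lambda$. Applying the displayed inequality at $j$ with $M=m+1$ and discarding the term $a_j$ gives
\[
\sum_{i=j+1}^{j+m}a_i\leq (K(m+1)-1)a_j\leq \bigl(Km+(K-1)\bigr)\lambda.
\]
This works equally for a run truncated at $N$, since $M$ is arbitrary. Summing over runs, whose number is at most $N$, gives
\[
\sum_{k<N,\,a_k>\lambda}a_k\leq K\lambda\Phi(\lambda)+(K-1)\lambda N .
\]
Writing the left side as $\lambda\Phi(\lambda)+G(\lambda)$ with $G(\lambda)=\int_\lambda^\infty\Phi(\mu)\,d\mu$, this becomes
\[
G(\lambda)\leq (K-1)\lambda\bigl(\Phi(\lambda)+N\bigr)=(K-1)\lambda\bigl(-G'(\lambda)+N\bigr).
\]

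Next I integrate this differential inequality, i.e.\ control $\tfrac{d}{d\lambda}\bigl(\lambda^{1/(K-1)}G(\lambda)\bigr)$. The starting value is $G(1)\leq\sum_k a_k\leq KN$. I expect this to give $G(\lambda)\leq C_KN\lambda^{-1/(K-1)}$ for $\lambda\geq1$, up to a harmless logarithmic-free constant coming from the $N$ term. Since $\Phi$ is decreasing, $\Phi(2\lambda)\leq G(\lambda)/\lambda$, which then yields the weak-type bound $\Phi(\lambda)\leq C_KN\lambda^{-K/(K-1)}$.

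Finally, with $r=K/(K-1)$, for $q/p<r$ the layer-cake formula gives
\[
\frac1N\sum_{k<N}a_k^{q/p}\leq 1+\frac{q}{pN}\int_1^\infty\lambda^{q/p-1}\Phi(\lambda)\,d\lambda\leq C.
\]
This is $q$-ACB for $q<pK/(K-1)$, and the case $q\leq p$ is already covered by Jensen.

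The main obstacle is extracting the sharp exponent $r=K/(K-1)$ from the inequality for $G$. Solving it crudely loses the constant, because the inhomogeneous term $(K-1)\lambda N$ has the same order as the other terms at $\lambda\approx1$ and could produce a spurious logarithm. If the direct ODE comparison fails, I would first sharpen the count of runs: at most $\Phi(\lambda/\theta)$ runs have a preceding value $a_j>\lambda/\theta$. I would then rerun the argument at two scales, or alternatively pass to the equivalent norm $|x|^p=\sup_N\frac1N\sum_{k<N}\|T^kx\|^p$, which makes the $j$-inequalities hold with averages that are monotone, and iterate the good-$\lambda$ inequality.
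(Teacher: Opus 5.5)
Your differential-inequality step fails, and the problem is worse than a spurious logarithm. The inequality $\sum_{k<N,\,a_k>\lambda}a_k\leq K\lambda\Phi(\lambda)+(K-1)\lambda N$ says nothing once $\lambda\geq K/(K-1)$, because its left-hand side is at most $\sum_{k<N}a_k\leq KN\leq (K-1)\lambda N$. Consistently with this, $(\lambda^{1/(K-1)}G)'\leq N\lambda^{1/(K-1)}$ only integrates to $G(\lambda)\leq KN\lambda^{-1/(K-1)}+\frac{K-1}{K}N\lambda$, which is worse than the trivial bound $G\leq KN$. So the claimed $G(\lambda)\leq C_KN\lambda^{-1/(K-1)}$ does not follow.

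If you instead bound the number of runs by $\Phi(\lambda)$, you get a correct estimate $\sum_{k<N,\,a_k>\lambda}a_k\leq (2K-1)\lambda\Phi(\lambda)$. However, layer-cake absorption then requires $(2K-1)(q/p-1)<q/p$, i.e.\ $q<p(2K-1)/(2K-2)$, which is strictly short of $pK/(K-1)$. In fact any bound $\sum_{k<N,\,a_k>\lambda}a_k\leq (K+\eta)\lambda\Phi(\lambda)$ with a fixed $\eta>0$ falls short. At $q/p=K/(K-1)$ the main term alone already has absorption coefficient $K(q/p-1)/(q/p)=1$.

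So the additive $(K-1)\lambda$ per run has to be eliminated, not recounted. It appears because you apply the hypothesis at the index preceding a run, since the first value of the run is uncontrolled: the family only gives $a_{j+1}\leq (2K-1)a_j$. Recounting runs at two scales does not by itself remove this term. The $\sup_N$-norm does not help either: an orbit with $\|T^kx\|^p=1,2K-1,0,0,\dots$ (as for the operator in Remark~\ref{remIndividualACBThreshold}) gives $|Tx|^p=(2-1/K)|x|^p$.

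The missing idea is an equivalent renorming that keeps the constant $K$ but makes the one-step growth ratio arbitrarily close to $1$. The paper uses $\|x\|_m^p=\sum_{j<m}\|T^jx\|^p$ with window length $m$. In your notation this is $b_j=\sum_{i<m}a_{j+i}$. It satisfies the same family with the same $K$ (apply the family at each $j+i$ and sum), together with $a_j\leq b_j$, $b_0\leq Km$ and $b_{j+1}=b_j-a_j+a_{j+m}$.

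The paper bounds $a_{j+m}\leq \frac1m\max_{1\leq k\leq m}\|T^k\|^p\,b_j$ using the growth estimate $\|T^k\|^p=O(k^{1-\delta})$ of Cohen--Cuny--Eisner--Lin. Within your scalar framework the family itself is enough. Applying it at $j+i$ with window $m-i+1$ gives $a_{j+m}\leq K(m-i+1)a_{j+i}$, and summing over $0\leq i<m$ gives $a_{j+m}\leq K\bigl(\sum_{t=2}^{m+1}1/t\bigr)^{-1}b_j$. Either way, $b_{j+1}\leq D_mb_j$ with $D_m\to1$.

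Now fix $\lambda\geq b_0$. Every maximal run of $\{j<N:b_j>\lambda\}$ starts at some $j_0\geq1$ with $b_{j_0}\leq D_m\lambda$. Applying the family at $j_0$ itself, rather than at $j_0-1$, gives $\sum_{j<N,\,b_j>\lambda}b_j\leq KD_m\lambda\,\#\{j<N:b_j>\lambda\}$ with no additive term. The identity $b^r=(r-1)\int_0^b\lambda^{r-2}b\,d\lambda$ and absorption then give $\sum_{j<N}b_j^r\leq CNb_0^r$ whenever $KD_m(r-1)<r$. Letting $m\to\infty$ covers every $q=pr<pK/(K-1)$. This also shows that no weak-type bound at the endpoint exponent is needed, although your plan was aiming for one.
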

	
	\begin{proof}
		If $C_{p,\mathrm{ac}}=1$, the $p$-absolute Ces\`aro estimate with $N=2$ gives
		\[
		\|x\|^p+\|Tx\|^p\leq2\|x\|^p
		\qquad(x\in X),
		\]
		so that $\|T\|\leq1$.
		
		Assume now that $C_{p,\mathrm{ac}}>1$. By Jensen's inequality, it suffices to consider
		\[
		p<q<\frac{pC_{p,\mathrm{ac}}}{C_{p,\mathrm{ac}}-1}.
		\]
		Set $r=q/p>1$. Then
		\begin{equation*}\label{eq:ACB-exponent-condition}
			C_{p,\mathrm{ac}}(r-1)<r.
		\end{equation*}
		
		We first construct an equivalent norm in which the
		$p$-absolute Ces\`aro bound remains at most $C_{p,\mathrm{ac}}$
		and the operator norm of $T$ is at most $1+\eta$,
		where $\eta>0$ is arbitrary.
		For $m\geq1$, define
		\[
		\|x\|_m
		=
		\left(\sum_{j=0}^{m-1}\|T^jx\|^p\right)^{1/p}.
		\]
		This is an equivalent norm on $X$, with
		\begin{equation}\label{eq:ACB-equivalent-norm}
			\|x\|\leq\|x\|_m\leq(C_{p,\mathrm{ac}}m)^{1/p}\|x\|.
		\end{equation}
		Moreover, for every $x\in X$ and $N\geq1$,
		\begin{align}
			\sum_{k=0}^{N-1}\|T^kx\|_m^p
			&=\sum_{j=0}^{m-1}\sum_{k=0}^{N-1}\|T^kT^jx\|^p
			\notag\\
			&\leq C_{p,\mathrm{ac}}N\sum_{j=0}^{m-1}\|T^jx\|^p
			=C_{p,\mathrm{ac}}N\|x\|_m^p.
			\label{eq:ACB-renormed-bound}
		\end{align}

		For every $0\leq j<m$, we have
		\[
		\|T^mx\|^p
		\leq \|T^{m-j}\|^p\|T^jx\|^p.
		\]
		Summing over $j$ gives
		\[
		\|T^mx\|^p
		\leq \frac1m\max_{1\leq k\leq m}\|T^k\|^p
		\|x\|_m^p.
		\]
		Consequently,
		\begin{align*}
			\|Tx\|_m^p
			&=\|x\|_m^p-\|x\|^p+\|T^mx\|^p\\
			&\leq D_m\|x\|_m^p,
		\end{align*}
		where
		\[
		D_m:=1+\frac1m\max_{1\leq k\leq m}\|T^k\|^p
		\xrightarrow[m\to\infty]{}1.
		\]
		Indeed, \cite[Proposition~3.1]{CohenCunyEisnerLin} gives
		$\|T^k\|^p=O((k+1)^{1-\delta})$ for some $\delta\in(0,1)$.
		
		Since $D_m\xrightarrow[m\to\infty]{}1$, we may fix $m$ sufficiently large that
		\begin{equation}\label{eq:ACB-absorption-condition}
			C_{p,\mathrm{ac}}D_m(r-1)<r.
		\end{equation}
		
		Fix $x\neq0$ and $N\geq1$. By
		\eqref{eq:ACB-renormed-bound} and the preceding operator norm
		estimate, for every $j\geq0$ and every $\ell\geq1$,
		\begin{equation}\label{eq:ACB-orbit-sequence}
			\sum_{k=j}^{j+\ell-1}\|T^kx\|_m^p
			\leq C_{p,\mathrm{ac}}\ell\|T^jx\|_m^p,
			\qquad
			\|T^{j+1}x\|_m^p\leq D_m\|T^jx\|_m^p.
		\end{equation}
		
		For $\lambda\geq0$, define
		\[
		E_\lambda
		=\{0\leq j<N:\|T^jx\|_m^p>\lambda\}.
		\]
		Suppose that $\lambda\geq\|x\|_m^p$, and write $E_\lambda$ as a disjoint union of maximal intervals.
		If $I=\{a,\ldots,b\}$ is one such interval, then
		$a\geq1$ and $\|T^{a-1}x\|_m^p\leq\lambda$.
		Hence $\|T^ax\|_m^p\leq D_m\lambda$, and
		\eqref{eq:ACB-orbit-sequence} gives
		\[
		\sum_{j\in I}\|T^jx\|_m^p
		\leq C_{p,\mathrm{ac}}|I|\|T^ax\|_m^p
		\leq C_{p,\mathrm{ac}}D_m\lambda|I|.
		\]
		Summing over these intervals yields, for every
		$\lambda\geq\|x\|_m^p$,
		\begin{equation}\label{eq:ACB-distribution-bound}
			\sum_{j\in E_\lambda}\|T^jx\|_m^p
			\leq C_{p,\mathrm{ac}}D_m\lambda\,\#E_\lambda.
		\end{equation}
		
		Using the identity
		\[
		a^r=(r-1)\int_0^a\lambda^{r-2}a\,d\lambda,
		\qquad a\geq0,
		\]
		followed by \eqref{eq:ACB-distribution-bound} and the
		layer-cake formula, we obtain
		\begin{align*}
			\sum_{j=0}^{N-1}\|T^jx\|_m^{pr}
			&=(r-1)\int_0^\infty
			\lambda^{r-2}
			\sum_{j\in E_\lambda}\|T^jx\|_m^p\,d\lambda\\
			&\leq
			\|x\|_m^{p(r-1)}
			\sum_{j=0}^{N-1}\|T^jx\|_m^p
			+C_{p,\mathrm{ac}}D_m(r-1)\int_{\|x\|_m^p}^\infty
			\lambda^{r-1}\#E_\lambda\,d\lambda\\
			&\leq
			C_{p,\mathrm{ac}}N\|x\|_m^{pr}
			+\frac{C_{p,\mathrm{ac}}D_m(r-1)}{r}
			\sum_{j=0}^{N-1}\|T^jx\|_m^{pr}.
		\end{align*}
		By \eqref{eq:ACB-absorption-condition}, this implies
		\[
		\frac1N\sum_{j=0}^{N-1}\|T^jx\|_m^q
		\leq
		\frac{C_{p,\mathrm{ac}}}{1-C_{p,\mathrm{ac}}D_m(r-1)/r}\,\|x\|_m^q.
		\]
		The case $x=0$ is immediate. Finally,
		\eqref{eq:ACB-equivalent-norm} yields
		\[
		\frac1N\sum_{j=0}^{N-1}\|T^jx\|^q
		\leq
		\frac{C_{p,\mathrm{ac}}(C_{p,\mathrm{ac}}m)^r}{1-C_{p,\mathrm{ac}}D_m(r-1)/r}\,\|x\|^q.
		\]
		Thus $T$ is $q$-absolutely Ces\`aro bounded.
	\end{proof}
	
	The upper endpoint in Theorem~\ref{thmACBSelfImprovement} cannot
	be increased or included, even for positive weighted shifts.
	
	\begin{example}\label{exSharpACBExponent}
		Fix $1\leq p<\infty$ and $K>1$, and set
		\[
		\beta=1-\frac1K,
		\qquad
		\alpha=\frac{\beta}{p},
		\qquad
		q_0=\frac{pK}{K-1}.
		\]
		On $\ell^p(\mathbb N)$, define the positive weighted backward
		shift $W$ by $We_1=0$ and, for every $j\geq2$,
		\[
		We_j=\left(\frac{j}{j-1}\right)^\alpha e_{j-1}.
		\]
		Then $C_{p,\mathrm{ac}}(W)=K$, and $W$ is $q$-absolutely
		Ces\`aro bounded if and only if $1\leq q<q_0$. Indeed, by \cite[Theorem~3.4(3) and the proof of Corollary~3.5]
		{AbbarArnoldCoine},
		\[
		C_{p,\mathrm{ac}}(W)
		=\sup_{j\geq1}j^{\beta-1}\sum_{\ell=1}^j\ell^{-\beta}
		=\frac1{1-\beta}=K,
		\]
		where the second equality follows from the integral comparisons \[
		\frac{(j+1)^{1-\beta}-1}{1-\beta}
		\leq \sum_{\ell=1}^j\ell^{-\beta}
		\leq \frac{j^{1-\beta}}{1-\beta}.
		\]
		
		Theorem~\ref{thmACBSelfImprovement} therefore gives
		$q$-absolute Ces\`aro boundedness for every $1\leq q<q_0$.
		On the other hand, since $\alpha q_0=1$,
		\[
		\frac1j\sum_{k=0}^{j-1}\|W^ke_j\|_p^{q_0}
		=\sum_{\ell=1}^j\frac1\ell
		\longrightarrow\infty.
		\]
		Thus $W$ is not $q_0$-absolutely Ces\`aro bounded.
		By Jensen's inequality, it is not $q$-absolutely Ces\`aro
		bounded for any $q\geq q_0$.
	\end{example}
	
	\begin{remark}\label{remIndividualACBThreshold}
		For a $p$-absolutely Ces\`aro bounded operator $T$, the constant
		$C_{p,\mathrm{ac}}(T)>1$ determines an optimal universal
		lower bound for
		\[
		q_*(T)=\sup\{q\geq p:T\text{ is }q\text{-absolutely Ces\`aro bounded}\},
		\]
		but does not determine $q_*(T)$ itself.
		Indeed, for $K>1$, the positive operator on $\ell^p(\{1,2\})$
		defined by
		\[
		S(x_1,x_2)=\bigl((2K-1)^{1/p}x_2,0\bigr)
		\]
		satisfies $S^2=0$ and $C_{p,\mathrm{ac}}(S)=K$.
		Nevertheless, $S$ is power bounded and hence $q$-absolutely
		Ces\`aro bounded for every $q\geq1$.
		Thus $q_*(S)=\infty$, whereas the operator in
		Example~\ref{exSharpACBExponent} has the same $p$-absolute
		Ces\`aro bound and satisfies $q_*(W)=pK/(K-1)$.
	\end{remark}
	
	\section{Self-improvement under non-trivial type}
	\label{secWeakEndpoint}
	
	IIn this section two independent randomizations will be used.
	For each $\delta\in(0,1]$, let $(\Sigma,\mathcal A,\mathbb P)$
	be the canonical product probability space carrying an i.i.d.
	Rademacher sequence $(\varepsilon_k)_{k\in\Z}$ and an independent
	i.i.d. Bernoulli sequence $(\eta_k)_{k\in\Z}$ with
	$\mathbb P(\eta_k=1)=\delta$.
	We suppress the dependence on $\delta$ in the notation.
	We write $\mathbb{E}_\varepsilon$ and $\mathbb E_\eta$ for the
	corresponding expectations, $\mathbb P_\eta$ for the Bernoulli
	probability, and $\mathbb E=\mathbb E_\eta\mathbb E_\varepsilon$
	for expectation on $(\Sigma,\mathcal A,\mathbb P)$.
	
	For $x\in X$, $N\geq1$ and $\lambda>0$, let
	\[
	d_{x,N}(\lambda)=\#\{0\leq k<N:\ \|T^kx\|>\lambda\}.
	\]
	
	\begin{thm}\label{thmWeakEndpoint}
		Let $X$ be a Banach space of type $s\in(1,2]$. If $T$ is
		absolutely Ces\`aro bounded, then there exists $C>0$ such that
		\[
		d_{x,N}(\lambda)
		\leq C N\frac{\|x\|^s}{\lambda^s}
		\]
		for every $x\in X$, every $N\geq1$ and every $\lambda>0$.
	\end{thm}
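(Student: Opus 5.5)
The plan is to bound $d:=d_{x,N}(\lambda)$ from both sides using one random vector built with the Bernoulli selectors $(\eta_k)$ of density $\delta$, and then choose $\delta\approx 1/d$. Fix $x$, $N$, $\lambda$, and let $E=\{0\leq k<N:\|T^kx\|>\lambda\}$, so $\#E=d$. We may assume $d\geq1$. Replacing $N$ by a multiple of $N$ and relabelling costs only a constant, so we may place the large orbit values in a window that is reached by all shifts considered below. Define
\[
w=\sum_{k=0}^{N-1}\eta_k\varepsilon_kT^kx .
\]
Absolute Ces\`aro boundedness applied to $w$ gives, for each realisation,
\[
\frac1N\sum_{m=0}^{N-1}\mathbb E_\varepsilon\|T^mw\|\leq C_{\mathrm{ac}}\,\mathbb E_\varepsilon\|w\| .
\]

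\emph{Lower bound.} We have $T^mw=\sum_k\eta_k\varepsilon_kT^{k+m}x$. By \eqref{eq:contraction-subset} with $I$ a singleton, $\mathbb E_\varepsilon\|T^mw\|\geq\|T^{k+m}x\|$ for every $k$ with $\eta_k=1$. Hence $\mathbb E_\varepsilon\|T^mw\|>\lambda$ whenever the random set $S=\{k:\eta_k=1\}$ satisfies $(m+S)\cap E\neq\varnothing$. For fixed $m$, the probability of this event is $1-(1-\delta)^{n_m}$, where $n_m=\#\{k<N:k+m\in E\}$. Averaging over $m$ gives $\sum_m n_m\gtrsim Nd$ once the ranges are arranged as above. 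Taking $\delta=\min\{1,1/d\}$ and using concavity of $t\mapsto1-(1-\delta)^t$, we get
\[
\mathbb E_\eta\,\#\{m<N:(m+S)\cap E\neq\varnothing\}\geq cN
\]
for an absolute constant $c>0$. Therefore
\[
\mathbb E\,\frac1N\sum_m\|T^mw\|\geq c\lambda .
\]

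\emph{Upper bound (main obstacle).} The step I expect to be hardest is a sparse version of Proposition~\ref{prop:Cuny-randomized}:
\[
\mathbb E_\eta\mathbb E_\varepsilon\Bigl\|\sum_{k=0}^{N-1}\eta_k\varepsilon_kT^kx\Bigr\|\leq C(\delta N)^{1/s}\|x\| .
\]
The contraction principle alone only gives $CN^{1/s}\|x\|$, which loses the factor $\delta^{1/s}$. A direct application of type is circular, because it requires bounds on $\|T^kx\|^s$. I would first try to rerun Cuny's argument behind \cite[Theorem~2.3, estimate~(9)]{CunyLp} with the selected indices in place of all indices. The points of $S$ form a renewal sequence with mean gap $1/\delta$, and ACB controls orbit averages over windows of any length. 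Concretely, I would split $S$ into blocks containing about $\delta N$ selected points each, and use type $s$ across the blocks together with the Kahane--Khintchine inequality to move between $L^1$ and $L^s$ moments. If that fails, the fallback is to condition on the gaps of $S$ and apply Proposition~\ref{prop:Cuny-randomized} to the operator $T^{g}$ along typical gap lengths $g\sim1/\delta$. For this I would also use \eqref{eq:contraction-weights} to absorb the irregular spacing.

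\emph{Conclusion.} Combining the two bounds with $\delta=1/d$ gives
\[
c\lambda\leq C_{\mathrm{ac}}C(N/d)^{1/s}\|x\| ,
\]
which rearranges to $d\leq C'N\|x\|^s/\lambda^s$. This is the estimate of Theorem~\ref{thmWeakEndpoint}.
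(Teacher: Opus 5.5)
Your skeleton (Bernoulli selectors of density $1/d$, a lower bound through \eqref{eq:contraction-subset} with singletons, and an upper bound of order $(\delta N)^{1/s}$) is the right one. But you leave the upper bound unproved, and it carries the whole difficulty of the theorem. The sparse estimate
\[
\mathbb E_\eta\mathbb E_\varepsilon\Bigl\|\sum_{k<N}\eta_k\varepsilon_kT^kx\Bigr\|\leq C(\delta N)^{1/s}\|x\|
\]
gives the statement at once, and neither of your suggested routes reaches it.
\begin{itemize}
\item Applying type across blocks of $S$ needs bounds on the norms of the block sums, which is the same problem again.
\item Applying Proposition~\ref{prop:Cuny-randomized} to $T^g$ fails because powers are not uniformly absolutely Ces\`aro bounded. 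Taking $N=2$ gives $C_{\mathrm{ac}}(T^g)\geq(1+\|T^g\|)/2$, which is unbounded in $g$ unless $T$ is power bounded. Moreover, the selected indices are not multiples of a fixed $g$.
\item \eqref{eq:contraction-weights} only absorbs scalar multipliers of fixed random vectors. It cannot absorb the change of orbit vectors caused by irregular gaps.
\end{itemize}
Your lower bound is also more complicated than needed once a sparse estimate is available. Since $\mathbb E_\varepsilon\|w\|\geq\max_{k\in S}\|T^kx\|$ directly, you need no application of ACB to $w$, no averaging over $m$ and no relabelling. As written, $\sum_m n_m=\sum_{e\in E}(e+1)$ is not $\gtrsim Nd$ when $E$ lies near $0$.

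The missing idea is how to break the circularity you correctly identified. The paper works on dyadic blocks $I_M=\{M/2,\ldots,M-1\}$ with $\|x\|=1$.
\begin{itemize}
\item It first \emph{normalizes}: $y_M=\sum_{k=1}^M\eta_k\varepsilon_k\,T^kx/(1+\|T^kx\|)$. All vectors now have norm at most $1$, so type gives $\mathbb E\|y_M\|\leq\tau_{s,X}(\delta M)^{1/s}$ without any information on the orbit.
\item The lost sizes are then recovered by absolute Ces\`aro boundedness itself. Let $\widetilde U$ be the measure-preserving shift $\varepsilon_k\eta_k\mapsto\varepsilon_{k+1}\eta_{k+1}$. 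The vector $u_M=\sum_{j<M}\widetilde T^{\,j}\widetilde U^{\,j}y_M$ satisfies $\mathbb E\|u_M\|\leq C_{\mathrm{ac}}M\,\mathbb E\|y_M\|$ by stationarity.
\item The coefficient of $u_M$ in front of $\varepsilon_n\eta_nT^nx$ is $b_n=\sum_{k=1}^n(1+\|T^kx\|)^{-1}$. By Cauchy--Schwarz and ACB, $b_n\geq M/(4C_{\mathrm{ac}}+2)$ for $n\in I_M$.
\item The contraction principle (first removing the terms outside $I_M$, then the weights $b_n$) turns this into your sparse estimate on $I_M$. Choosing $\delta=1/R$ blockwise and summing over dyadic blocks gives the theorem.
\end{itemize}
Without this, or an equivalent mechanism, the proposal does not prove the statement.
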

	
	\begin{proof}
		The case $x=0$ is immediate. By homogeneity, it is enough
		to consider $x\in X$ with $\|x\|=1$.  We first prove a uniform estimate on dyadic
		blocks. Let $M=2^m$ with $m\geq1$, and set
		$I_M=\{M/2,\ldots,M-1\}$. We claim that
		\begin{equation}\label{eq:dyadic-weak-estimate}
			\#\{n\in I_M:\ \|T^nx\|>\lambda\}\lambda^s
			\leq CM
			\qquad(\lambda>0).
		\end{equation}
		
		Fix $\delta\in(0,1]$ and set $\xi_k:=\varepsilon_k\eta_k$. Let $U\in\mathcal L(L^1(\Sigma))$ be an isometric Koopman operator
		satisfying $U\xi_k=\xi_{k+1}$ for every $k\in\Z$.
		We denote by $\widetilde U\in\mathcal L(L^1(\Sigma;X))$
		its canonical extension to $L^1(\Sigma;X)$ and by
		$\widetilde T\in\mathcal L(L^1(\Sigma;X))$ the pointwise extension
		of $T$, defined by $(\widetilde T F)(\omega)=T(F(\omega))$.
		These operators commute, and for every $j \geq 0$ and every $F\in L^1(\Sigma;X)$,
		\begin{equation}\label{stationarity}
			\mathbb E\|\widetilde U^{\,j}F\|
			=\mathbb E\|F\| .
		\end{equation}
		Define
		\[
		y_M=\sum_{k=1}^M\xi_k\frac{T^kx}{1+\|T^kx\|},
		\qquad
		u_M=\sum_{j=0}^{M-1}\widetilde T^{\,j}\widetilde U^{\,j}y_M.
		\]
		Conditioning on the Bernoulli variables and applying
		\eqref{eq:type}, we obtain
		\begin{align*}
			\mathbb E\|y_M\|^s
			&=\mathbb E_\eta\mathbb E_\varepsilon
			\left\|\sum_{k=1}^M
			\varepsilon_k\eta_k\frac{T^kx}{1+\|T^kx\|}\right\|^s \\
			&\leq \tau_{s,X}^s\mathbb E_\eta
			\sum_{k=1}^M\eta_k\frac{\|T^kx\|^s}{(1+\|T^kx\|)^s}
			\leq \tau_{s,X}^s\delta M.
		\end{align*}
		Hence by Jensen's inequality, $\mathbb E\|y_M\|\leq \tau_{s,X}(\delta M)^{1/s}$. Then, by stationarity \eqref{stationarity} and
		absolute Ces\`aro boundedness,
		\begin{equation}\label{eq:uM-upper-bound}
			\mathbb E\|u_M\|
			\leq C_{\mathrm{ac}}M\,\mathbb E\|y_M\|
			\leq C_{\mathrm{ac}}\tau_{s,X}M(\delta M)^{1/s}.
		\end{equation}
		
		For $1\leq n\leq M$, the coefficient of
		$\varepsilon_n\eta_nT^nx$ in the expansion of $u_M$ is
		\[
		b_n:=\sum_{k=1}^n(1+\|T^kx\|)^{-1}.
		\] Since $\|x\|=1$, absolute Ces\`aro
		boundedness gives
		$\sum_{k=1}^{M/2}(1+\|T^kx\|)\leq(C_{\mathrm{ac}}+1/2)M$. Thus
		Cauchy--Schwarz yields $b_n\geq\kappa M$ for every $n\in I_M$, where
		$\kappa=(4C_{\mathrm{ac}}+2)^{-1}$.
		
		For every realisation of the Bernoulli variables, applying
		\eqref{eq:contraction-subset} and then
		\eqref{eq:contraction-weights}, we obtain
		\begin{align*}
			\mathbb E_\varepsilon\|u_M\|
			&\geq
			\mathbb E_\varepsilon
			\left\|\sum_{n\in I_M}
			\varepsilon_n\eta_n b_nT^nx\right\|\\
			&\geq
			\frac{2\kappa M}{\pi}\,
			\mathbb E_\varepsilon
			\left\|\sum_{n\in I_M}
			\varepsilon_n\eta_nT^nx\right\|,
		\end{align*}
		where we used $b_n\geq\kappa M$ for every $n\in I_M$.
		Moreover, applying \eqref{eq:contraction-subset} with
		$I=\{k\}$ and $J=I_M$ gives, for every $k\in I_M$,
		\[
		\mathbb E_\varepsilon
		\left\|\sum_{n\in I_M}\varepsilon_n\eta_nT^nx\right\|
		\geq
		\mathbb E_\varepsilon\|\varepsilon_k\eta_kT^kx\|
		=\eta_k\|T^kx\|.
		\]
		Taking the maximum over $k\in I_M$, we obtain
		\[
		\mathbb E_\varepsilon
		\left\|\sum_{n\in I_M}\varepsilon_n\eta_nT^nx\right\|
		\geq \max_{k\in I_M}\eta_k\|T^kx\|.
		\]
		Taking expectation with respect to the Bernoulli variables
		and using \eqref{eq:uM-upper-bound}, we obtain
		\begin{equation}\label{eq:Bernoulli-maximum}
			\mathbb E_\eta\max_{n\in I_M}\eta_n \|T^nx\|
			\leq C(\delta M)^{1/s},
		\end{equation}
		where $C$ is independent of $M$, $\delta$ and the unit vector $x$.
		
		Let
		\[
		E_\lambda=\{n\in I_M:\|T^nx\|>\lambda\},
		\qquad R=\#E_\lambda.
		\]
		If $R=0$, there is nothing to prove. Otherwise, choose
		$\delta=R^{-1}$. Independence of the Bernoulli variables yields
		\begin{align*}
			\mathbb E_\eta\max_{n\in I_M}\eta_n\|T^nx\|
			&\geq
			\lambda\,\mathbb P_\eta
			\left(\sum_{n\in E_\lambda}\eta_n\geq1\right)\\
			&=\lambda\left[
			1-\mathbb P_\eta
			\left(\bigcap_{n\in E_\lambda}\{\eta_n=0\}\right)
			\right]\\
			&=\lambda\left[1-\left(1-\frac1R\right)^R\right]\\
			&\geq(1-e^{-1})\lambda.
		\end{align*}
		Combining this with \eqref{eq:Bernoulli-maximum} gives
		\[
		(1-e^{-1})\lambda\leq C(M/R)^{1/s},
		\]
		and hence $R\lambda^s\leq CM$, proving \eqref{eq:dyadic-weak-estimate}.
		
		Let now $N\geq2$ and choose $J\geq1$ such that
		$2^{J-1}<N\leq2^J$. Since
		$\{0,\ldots,2^J-1\}=\{0\}\cup\bigcup_{j=1}^JI_{2^j}$
		and $\|x\|=1$, estimate~\eqref{eq:dyadic-weak-estimate} gives
		\[
		d_{x,N}(\lambda)
		\leq \mathbf 1_{\{\lambda<1\}}
		+C\lambda^{-s}\sum_{j=1}^J2^j
		\leq C'N\lambda^{-s}.
		\]
		The case $N=1$ is immediate, and homogeneity completes the proof.
	\end{proof}
	
	\begin{cor}\label{corSubendpointACB}
		Under the assumptions of Theorem~\ref{thmWeakEndpoint}, $T$ is
		$q$-absolutely Ces\`aro bounded for every $1\leq q<s$.
	\end{cor}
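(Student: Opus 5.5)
The plan is to integrate the weak-type estimate of Theorem~\ref{thmWeakEndpoint} through the layer-cake formula, exactly as one proves that weak $L^s$ embeds into $L^q$ for $q<s$ on a finite measure space. The only point needing care is where to split the $\lambda$-integral.

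Fix $1\leq q<s$. By homogeneity we may assume $\|x\|=1$. For $N\geq1$ we write
\[
\sum_{k=0}^{N-1}\|T^kx\|^q=q\int_0^\infty\lambda^{q-1}d_{x,N}(\lambda)\,d\lambda .
\]
We then split the integral at $\lambda=1$. On $(0,1)$ we use the trivial bound $d_{x,N}(\lambda)\leq N$, which gives a contribution of at most $N$. On $(1,\infty)$ we use Theorem~\ref{thmWeakEndpoint}, namely $d_{x,N}(\lambda)\leq CN\lambda^{-s}$, which gives
\[
qCN\int_1^\infty\lambda^{q-1-s}\,d\lambda=\frac{qC}{s-q}\,N,
\]
and this is finite precisely because $q<s$.

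Combining the two pieces yields
\[
\frac1N\sum_{k=0}^{N-1}\|T^kx\|^q\leq\Bigl(1+\frac{qC}{s-q}\Bigr)\|x\|^q
\]
after undoing the normalisation. Hence $T$ is $q$-ACB with $C_{q,\mathrm{ac}}\leq 1+qC/(s-q)$. The case $q=1$ is simply the hypothesis, and it is also covered by the computation above.

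There is no real obstacle here. The only point to watch is that the constant blows up as $q\to s$, which is consistent with the logarithmic estimate announced at the endpoint. Splitting instead at $\lambda=N^{1/s}$ and using $d_{x,N}(\lambda)=0$ for $\lambda\geq\sup_{k<N}\|T^k\|$ would give that logarithmic bound.
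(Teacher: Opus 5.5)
Your proof is correct and is essentially the paper's argument: you apply the layer-cake formula with $d_{x,N}(\lambda)\leq\min\{N,CN\|x\|^s\lambda^{-s}\}$ and split the integral. The only difference is the split point. You split at $\lambda=\|x\|$, giving the constant $1+qC/(s-q)$, while the paper splits at $\lambda_0=C^{1/s}\|x\|$, giving $\frac{s}{s-q}C^{q/s}$. Your closing aside about the endpoint is not needed for this statement. The vanishing of $d_{x,N}(\lambda)$ for $\lambda>(CN)^{1/s}\|x\|$ follows directly from the weak estimate, since $d_{x,N}$ is integer-valued, so no separate control of $\sup_{k<N}\|T^k\|$ is required. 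This is how the paper obtains the logarithmic bound.
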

	
	\begin{proof}
		Let $x\in X$ and $N\geq1$. The case $x=0$ is immediate, so assume $x\neq0$.
		Theorem~\ref{thmWeakEndpoint} gives
		$d_{x,N}(\lambda)\leq\min\{N,CN\|x\|^s\lambda^{-s}\}$ for every
		$\lambda>0$. Hence, with $\lambda_0=C^{1/s}\|x\|$, the layer-cake formula
		yields
		\begin{align*}
			\sum_{k=0}^{N-1}\|T^kx\|^q
			&=q\int_0^\infty\lambda^{q-1}d_{x,N}(\lambda)\,d\lambda \\
			&\leq qN\int_0^{\lambda_0}\lambda^{q-1}\,d\lambda
			+qCN\|x\|^s\int_{\lambda_0}^\infty\lambda^{q-s-1}\,d\lambda \\
			&=\frac{s}{s-q}C^{q/s}N\|x\|^q.
		\end{align*}
		The resulting bound depends on $q$ and diverges as $q\uparrow s$.
	\end{proof}
	\begin{remark}\label{remLpWeakEndpoint}
		Since $L^p(\Omega)$ has type $\min\{p,2\}$ for $1<p<\infty$,
		Corollary~\ref{corSubendpointACB} implies that every absolutely
		Ces\`aro bounded operator on $L^p(\Omega)$ is
		$q$-absolutely Ces\`aro bounded for every
		$1\leq q<\min\{p,2\}$.
	\end{remark}
	\begin{cor}\label{corEndpointLogarithm}
		Under the assumptions of Theorem~\ref{thmWeakEndpoint}, there exists $C>0$
		such that
		\[
		\frac1N\sum_{k=0}^{N-1}\|T^kx\|^s
		\leq C\log(N+1)\|x\|^s
		\]
		for every $x\in X$ and every $N\geq1$.
	\end{cor}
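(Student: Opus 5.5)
The bound follows from the weak-type estimate of Theorem~\ref{thmWeakEndpoint} through the layer-cake formula, cutting the integral at two levels instead of one as in Corollary~\ref{corSubendpointACB}. We take $x\neq0$; by homogeneity we may assume $\|x\|=1$. The layer-cake formula gives
\[
\sum_{k=0}^{N-1}\|T^kx\|^s=s\int_0^\infty\lambda^{s-1}d_{x,N}(\lambda)\,d\lambda .
\]
To use it we need a lower cutoff, below which we bound $d_{x,N}(\lambda)\le N$, and an upper cutoff, above which $d_{x,N}(\lambda)=0$. In between, Theorem~\ref{thmWeakEndpoint} gives $\lambda^{s-1}d_{x,N}(\lambda)\leq CN\lambda^{-1}$, and integrating this produces exactly a logarithm of the ratio of the two cutoffs.

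For the upper cutoff we need a polynomial bound on $\max_{0\le k<N}\|T^k\|$. The weak estimate itself supplies one. Apply Theorem~\ref{thmWeakEndpoint} with $N=k+1$ and any $\lambda<\|T^kx\|$: then $d_{x,k+1}(\lambda)\geq1$, so $1\le C(k+1)\lambda^{-s}$. Letting $\lambda\uparrow\|T^kx\|$ gives $\|T^kx\|\le C^{1/s}(k+1)^{1/s}$. (Alternatively, one can use the growth bound $\|T^k\|=O(k^{1-\delta})$ from \cite[Proposition~3.1]{CohenCunyEisnerLin}; any polynomial bound suffices.) Hence $d_{x,N}(\lambda)=0$ for $\lambda\geq\Lambda:=C^{1/s}N^{1/s}$.

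Next set $\lambda_0=C^{1/s}$. The range $\lambda<\lambda_0$ contributes at most $N\lambda_0^s=CN$. The range $\lambda_0\le\lambda\le\Lambda$ contributes at most
\[
sCN\int_{\lambda_0}^{\Lambda}\frac{d\lambda}{\lambda}=sCN\log(\Lambda/\lambda_0)=CN\log N .
\]
Adding the two pieces gives $\sum_{k<N}\|T^kx\|^s\le CN(1+\log N)\le C'N\log(N+1)$, since $\log(N+1)\geq\log 2$ for all $N\geq1$. Dividing by $N$ and restoring homogeneity proves the claim.

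The only step needing care is the upper cutoff, that is, the polynomial bound on $\|T^k\|$, and deriving it directly from Theorem~\ref{thmWeakEndpoint} as above settles it. The rest is routine bookkeeping of constants.
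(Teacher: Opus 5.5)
Your proposal is correct and takes the same approach as the paper: the layer-cake formula with lower cutoff $\lambda_0=C^{1/s}\|x\|$ and upper cutoff $(CN)^{1/s}\|x\|$, where the upper cutoff comes from $d_{x,N}$ being integer-valued. The only difference is cosmetic. The paper applies integrality directly at level $N$ (if $CN\|x\|^s\lambda^{-s}<1$ then $d_{x,N}(\lambda)=0$), whereas you apply it with $N=k+1$ to obtain $\|T^kx\|\le (C(k+1))^{1/s}\|x\|$, which gives the same cutoff.
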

	
	\begin{proof}
		The case $x=0$ is immediate, so assume $x\neq0$.
		By Theorem~\ref{thmWeakEndpoint}, there exists $C\geq1$ such that
		\[
		d_{x,N}(\lambda)
		\leq\min\{N,CN\|x\|^s\lambda^{-s}\}
		\qquad(\lambda>0).
		\]
		Since $d_{x,N}(\lambda)$ is a non-negative integer, this also gives
		\[
		d_{x,N}(\lambda)=0
		\qquad\text{whenever }\lambda>(CN)^{1/s}\|x\|.
		\]
		Set $\lambda_0=C^{1/s}\|x\|$. By the layer-cake formula,
		\begin{align*}
			\sum_{k=0}^{N-1}\|T^kx\|^s
			&=s\int_0^\infty
			\lambda^{s-1}d_{x,N}(\lambda)\,d\lambda\\
			&\leq
			sN\int_0^{\lambda_0}\lambda^{s-1}\,d\lambda
			+sCN\|x\|^s
			\int_{\lambda_0}^{N^{1/s}\lambda_0}
			\frac{d\lambda}{\lambda}\\
			&=CN(1+\log N)\|x\|^s\\
			&\leq C'N\log(N+1)\|x\|^s.
		\end{align*}
	\end{proof}

	\section{Power growth under non-trivial type}
	\label{secPolynomialGap}
	
	Let $\mathbb K$ denote the scalar field of $X$. Let $y_0,\ldots,y_n\in X$ satisfy $\|y_j\|=1$ for every $0\leq j\leq n$,
	and let $(\varepsilon_j)_{j=0}^n$ be independent Rademacher variables. On
	$\mathbb K^{n+1}$ define
	\[
	\|c\|_{E_y}
	=\mathbb E\left\|\sum_{j=0}^n\varepsilon_jc_jy_j\right\|.
	\]
	By \eqref{eq:contraction-subset}, for $0 \le k \le n$, $|c_k|\leq\|c\|_{E_y}$. Thus $\|\cdot\|_{E_y}$ is a norm. The canonical basis vectors satisfy $\|e_j\|_{E_y}=1$
	for every $0\leq j\leq n$, 
	and every coordinate projection is contractive. 
	
	\begin{lemma}\label{lemRademacherDisjointCotype}
		Assume that $X$ has finite cotype $r$. Then, for every family $y_0,\ldots,y_n\in X$ satisfying
		$\|y_j\|=1$ for all $0\leq j\leq n$, and every $c^{(1)},\ldots,c^{(m)}\in\mathbb K^{n+1}$ with
		pairwise disjoint supports,
		\[
		\left(\sum_{i=1}^m\|c^{(i)}\|_{E_y}^r\right)^{1/r}
		\leq \theta_{r,X}
		\left\|\sum_{i=1}^mc^{(i)}\right\|_{E_y},
		\]
		where $\theta_{r,X}=K_r\chi_{r,X}$.
	\end{lemma}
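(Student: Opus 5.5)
We will deduce the lemma from the independent-variable cotype inequality \eqref{eq:independent-cotype}, combined with the Kahane--Khintchine inequality to pass between first and $r$-th moments.

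Let $A_i\subseteq\{0,\ldots,n\}$ denote the (pairwise disjoint) supports of the $c^{(i)}$. Define the $X$-valued random variables
\[
\zeta_i=\sum_{j\in A_i}\varepsilon_jc^{(i)}_jy_j,\qquad 1\leq i\leq m.
\]
Since the $A_i$ are disjoint, the $\zeta_i$ are functions of disjoint blocks of the Rademacher sequence, hence independent. Each is centered, since it is symmetric, and bounded, so it has finite $r$-th moment. Moreover,
\[
\sum_{i=1}^m\zeta_i=\sum_{j=0}^n\varepsilon_j\Bigl(\sum_{i=1}^mc^{(i)}\Bigr)_jy_j .
\]
By definition, $\|c^{(i)}\|_{E_y}=\mathbb E\|\zeta_i\|$ and $\|\sum_ic^{(i)}\|_{E_y}=\mathbb E\|\sum_i\zeta_i\|$.

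The chain of estimates runs as follows. First, Jensen gives $\mathbb E\|\zeta_i\|\leq(\mathbb E\|\zeta_i\|^r)^{1/r}$, so
\[
\Bigl(\sum_{i}\|c^{(i)}\|_{E_y}^r\Bigr)^{1/r}\leq\Bigl(\sum_i\mathbb E\|\zeta_i\|^r\Bigr)^{1/r}.
\]
Second, \eqref{eq:independent-cotype} bounds the right-hand side by $\chi_{r,X}(\mathbb E\|\sum_i\zeta_i\|^r)^{1/r}$. Third, $\sum_i\zeta_i$ is itself a Rademacher sum $\sum_j\varepsilon_jx_j$ with $x_j=(\sum_ic^{(i)})_jy_j$. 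The Kahane--Khintchine inequality in the form recorded in the preliminaries therefore gives
\[
\Bigl(\mathbb E\Bigl\|\sum_i\zeta_i\Bigr\|^r\Bigr)^{1/r}\leq K_r\,\mathbb E\Bigl\|\sum_i\zeta_i\Bigr\|=K_r\Bigl\|\sum_ic^{(i)}\Bigr\|_{E_y}.
\]
Multiplying the constants yields $\theta_{r,X}=K_r\chi_{r,X}$, as stated.

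There is no real obstacle. The only point requiring care is the independence and centering of the block sums $\zeta_i$, which is what licenses \eqref{eq:independent-cotype}; it follows from disjointness of the supports. Two degenerate cases are harmless: if some $c^{(i)}=0$ then $\zeta_i=0$, and if $\mathbb K=\mathbb C$ the coefficients are complex but this does not affect any of the steps. The hypothesis $\|y_j\|=1$ plays no role beyond making $\|\cdot\|_{E_y}$ a norm.
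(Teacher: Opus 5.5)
Your proposal is correct and follows the paper's proof essentially line by line. Both define the block Rademacher sums over the disjoint supports, note that they are independent and centred, and chain Jensen/H\"older, the independent-variable cotype inequality \eqref{eq:independent-cotype}, and Kahane--Khintchine to get the constant $K_r\chi_{r,X}$.
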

	
	\begin{proof}
		For $1\leq i\leq m$, let
		$Y_i=\sum_{j\in\operatorname{supp}c^{(i)}}
		\varepsilon_jc_j^{(i)}y_j$. The variables $Y_1,\ldots,Y_m$ are independent
		and centred. By H\"older's inequality, \eqref{eq:independent-cotype} and the
		Kahane--Khintchine inequality,
		\begin{align*}
			\left(\sum_{i=1}^m\|c^{(i)}\|_{E_y}^r\right)^{1/r}
			&\leq\left(\sum_{i=1}^m\mathbb E\|Y_i\|^r\right)^{1/r} \\
			&\leq \chi_{r,X}
			\left(\mathbb E\left\|\sum_{i=1}^mY_i\right\|^r\right)^{1/r} \\
			&\leq \chi_{r,X}K_r\,
			\mathbb E\left\|\sum_{i=1}^mY_i\right\|,
		\end{align*}
		which is the desired estimate.
	\end{proof}
	
	\begin{thm}\label{thmPolynomialGap}
		Let $X$ have type $s\in(1,2]$ and denote by $r r\in[2,\infty)$ its cotype. If $T$ is
		absolutely Ces\`aro bounded, then there exist $C>0$ and $\eps>0$ such that
		\[
		\|T^n\|\leq C(n+1)^{1/s-\eps}
		\qquad (n\geq0).
		\]
	\end{thm}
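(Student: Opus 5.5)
Throughout, $C_{\mathrm{ac}}$ is the absolute Ces\`aro bound and $C_0$ the constant of Proposition~\ref{prop:Cuny-randomized}. The plan is a bootstrap on the normalized growth function
\[
\Phi(n)=\max_{0\leq k\leq n}\frac{\|T^k\|}{(k+1)^{1/s}},
\]
which is bounded by Cuny's estimate. I would aim to prove a contraction
\[
\Phi(Ln)\leq(1-\gamma)\Phi(n)+A
\]
for some fixed integer $L\geq2$ and constants $\gamma\in(0,1)$, $A>0$ independent of $n$. Iterating along $n=L^j$ then gives $\Phi(L^j)\lesssim(1-\gamma)^j$ up to an additive constant. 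Hence $\|T^{m}\|\leq C(m+1)^{1/s-\eps}$ with $\eps$ of order $\log(1/(1-\gamma))/\log L$, after interpolating between consecutive powers of $L$ by monotonicity of $\Phi$.

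\emph{Step 1: upper bound at scale $Ln$.} Fix $n$ and a unit vector $x$ with $\|T^{Ln}x\|\geq\frac12\|T^{Ln}\|$; we may assume $T^jx\neq0$ for the relevant $j$. Write $a_j=\|T^jx\|$ and $y_j=T^jx/a_j$. For $0\leq k<n$ factor $T^{Ln}x=T^{Ln-k}T^kx$. Combining Proposition~\ref{prop:Cuny-randomized} with the weighted contraction principle \eqref{eq:contraction-weights} and the bound $\|T^{Ln-k}\|\leq\Phi(Ln)(Ln)^{1/s}$, I would control the randomized block sums
\[
\mathbb E\Bigl\|\sum_{k\in B}\varepsilon_kT^kx\Bigr\|,\qquad B\subseteq[0,Ln),
\]
in terms of $|B|^{1/s}$. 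Pulling these block sums forward by powers of $T$ then controls the block norm $\bigl\|\sum_{j\in B}\varepsilon_j a_j e_j\bigr\|_{E_y}$, where $B$ ranges over the dyadic blocks $B_i=[2^{i-1},2^i)\cap[0,Ln)$. In this language the obstruction to a gain is a weight sequence $a_j$ that saturates the $s$-type bound on every block simultaneously.

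\emph{Step 2: lower bound using cotype.} Applying Lemma~\ref{lemRademacherDisjointCotype} to the disjoint blocks $B_i$, together with the one-term lower bound \eqref{eq:contraction-subset}, should give
\[
\Bigl(\sum_i\max_{j\in B_i}a_j^{\,r}\Bigr)^{1/r}\leq\theta_{r,X}\,\mathbb E\Bigl\|\sum_{j<Ln}\varepsilon_jT^jx\Bigr\|\leq\theta_{r,X}C_0(Ln)^{1/s}.
\]
Meanwhile the maximality of $\Phi$ forces $a_j\gtrsim\Phi(Ln)\,2^{i/s}$ at the top of many blocks. This is because $T^{Ln}x=T^{Ln-j}(T^jx)$ gives $a_j\geq\|T^{Ln}x\|/\|T^{Ln-j}\|$, and for $j$ near $Ln$ the exponent $Ln-j$ is small. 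In the opposite direction, absolute Ces\`aro boundedness bounds the number of blocks on which $a_j$ is large. Weighing these, the $\ell^r$-sum over about $\log L$ blocks gains a factor $(\log L)^{1/r}$; this is exactly Cuny's logarithmic gain, now applied at a single fixed ratio $L$. Choosing $L$ with $(\log L)^{1/r}$ larger than a suitable constant multiple of $\theta_{r,X}C_0C_{\mathrm{ac}}$ turns this into the multiplicative loss $(1-\gamma)$ at each step, which is why a polynomial rate follows.

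\emph{Main obstacle.} The hard step is making Step~1 rigorous: transferring the operator $T^{m}$ acting on the orbit to a finite-dimensional weighted shift $S_m e_j=(a_{j+m}/a_j)e_{j+m}$ on $(\mathbb K^{Ln},\|\cdot\|_{E_y})$. The norm of this shift must be bounded by $\Phi$ at the smaller scale, not $\Phi(Ln)$, or the recursion does not close. I would first try to bound $\|S_m\|$ using Proposition~\ref{prop:Cuny-randomized} applied to the vectors $T^jx$ with the weights absorbed via \eqref{eq:contraction-weights}. If that loses constants, the fallback is to choose $x$ nearly extremal at two scales simultaneously, using pigeonhole over $j\leq \log_L n$ scales. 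This makes the ratios $a_{j+m}/a_j$ comparable to $((j+m)/j)^{1/s}\Phi$ on a positive proportion of the blocks, which is all Step~2 needs.
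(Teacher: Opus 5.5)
There is a genuine gap. The bootstrap cannot produce any $\eps>0$ even if Steps~1 and~2 are granted, and the step that would have to supply the gain is never established.

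\textbf{The recursion gives no decay.} As defined, $\Phi(n)=\max_{0\le k\le n}\|T^k\|/(k+1)^{1/s}$ is nondecreasing in $n$, and $\Phi\ge 1$ because of the term $k=0$. So $\Phi(Ln)\ge\Phi(n)$, and your inequality $\Phi(Ln)\le(1-\gamma)\Phi(n)+A$ only says $\Phi\le A/\gamma$. More generally, any recursion with a constant additive term iterates to $\Phi(L^j)\le(1-\gamma)^j\Phi(1)+A/\gamma$, i.e.\ $\|T^m\|=O(m^{1/s})$. Proposition~\ref{prop:Cuny-randomized} already gives that. To extract a power you would need a nonincreasing tail quantity such as $\sup_{k\ge N}\|T^k\|/(k+1)^{1/s}$, together with a purely multiplicative contraction (or an additive error that decays polynomially).

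\textbf{Step~2 does not produce a gain.} Its only lower bound, $a_j\ge\|T^{Ln}x\|/\|T^{Ln-j}\|$, is informative only when $Ln-j$ is small. Grouping by $Ln-j\in[2^{i-1},2^i)$ gives lower bounds of order $\|T^{Ln}x\|/(\Phi(Ln)2^{i/s})$ on each block. Their $\ell^r$-sum is geometric and dominated by a single block, so the cotype estimate returns $\|T^{Ln}x\|\lesssim\theta_{r,X}C_0\Phi(Ln)(Ln)^{1/s}$. That is circular, and no $(\log L)^{1/r}$ factor appears. The claim $a_j\gtrsim\Phi(Ln)2^{i/s}$ on many blocks is unsupported. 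The step you flag as the main obstacle is exactly where the gain would have to come from, and it is left open.

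\textbf{The missing idea.} You should use that the orbit shift is itself absolutely Ces\`aro bounded, rather than trying to bound its norm. The paper's argument runs as follows.
\begin{enumerate}
\item Take the reversed orbit $y_j=T^{n-j}x/a_{n-j}$ and the backward shift $B$ on $(\mathbb K^{n+1},\|\cdot\|_{E_y})$; this is your $S_1$ up to reindexing. The contraction principle gives $\sum_{k<N}\|B^kc\|_{E_y}\le C_{\mathrm{ac}}N\|c\|_{E_y}$.
\item Use the shift-invariant coefficients $\Pi_j^{-1}=a_{n-j}/a_n$. Let $z_\ell$ carry these coefficients on $[3^\ell,3^{\ell+1})$ and $v_\ell$ on $[0,3^\ell)$, and set $Z_\ell=\|z_\ell\|_{E_y}$.
\item Averaging $B^kz_\ell$ over $3^\ell\le k<2\cdot 3^\ell$ gives $\|v_\ell\|_{E_y}\le 2C_{\mathrm{ac}}Z_\ell$.
\item Lemma~\ref{lemRademacherDisjointCotype}, applied to $v_\ell=e_0+\sum_{j<\ell}z_j$, yields $Z_\ell^r\ge\alpha^{-r}(1+\sum_{j<\ell}Z_j^r)$ with $\alpha=2C_{\mathrm{ac}}\theta_{r,X}$. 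Hence the geometric growth $Z_\ell\ge\alpha^{-1}\rho^\ell$, where $\rho=(1+\alpha^{-r})^{1/r}>1$.
\item Finally, $a_nZ_L=\mathbb E\|\sum_{j=3^L}^{3^{L+1}-1}\varepsilon_jT^{n-j}x\|\le C(n+1)^{1/s}\|x\|$ by Proposition~\ref{prop:Cuny-randomized}.
\end{enumerate}
Therefore $\|T^nx\|\lesssim(n+1)^{1/s}\rho^{-L}$ with $L\approx\log_3 n$, which is the polynomial gain. This is a single-scale argument with no induction over scales.
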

	
	\begin{proof}
		Fix $x\in X$ and $n\geq2$, and set $a_j=\|T^jx\|$ for $0\leq j\leq n$.
		We may assume that $\|T^nx\|>0$; then $a_j>0$ for every $0\leq j\leq n$. Put
		$y_j=T^{n-j}x/a_{n-j}$ and equip $\mathbb{K}^{n+1}$ with the norm defined above.
		
		Let $B$ be the backward weighted shift on $E_y$ given by $Be_0=0$ and
		$Be_j=\beta_je_{j-1}$ for $1\leq j\leq n$, where
		$\beta_j=a_{n-j+1}/a_{n-j}$. We first observe that $B$ is absolutely Ces\`aro
		bounded with constant $C_{\mathrm{ac}}$. Indeed, for $0\leq k\leq j\leq n$, we have
		\[
		B^ke_j=\frac{a_{n-j+k}}{a_{n-j}}e_{j-k},
		\qquad
		T^ky_j=\frac{a_{n-j+k}}{a_{n-j}}y_{j-k}.
		\]
		Thus, for every $c=(c_0,\ldots,c_n)\in\mathbb K^{n+1}$
		and $0\leq k\leq n$,
		\begin{align*}
			\|B^kc\|_{E_y}
			&=\mathbb E\left\|
			\sum_{j=k}^n\varepsilon_{j-k}c_j
			\frac{a_{n-j+k}}{a_{n-j}}y_{j-k}
			\right\|\\
			&=\mathbb E\left\|
			\sum_{j=k}^n\varepsilon_jc_jT^ky_j
			\right\|\\
			&\leq\mathbb E\left\|
			T^k\sum_{j=0}^n\varepsilon_jc_jy_j
			\right\|.
		\end{align*}
		The second equality follows because
		$(\varepsilon_{j-k})_{j=k}^n$ and $(\varepsilon_j)_{j=k}^n$
		have the same distribution, and the inequality follows from
		\eqref{eq:contraction-subset}.
		For $k>n$, the same inequality holds since $B^k=0$.
		Consequently, absolute Ces\`aro boundedness of $T$ gives,
		for every $N\geq1$,
		\[
		\sum_{k=0}^{N-1}\|B^kc\|_{E_y}
		\leq
		\mathbb E\sum_{k=0}^{N-1}
		\left\|T^k\sum_{j=0}^n\varepsilon_jc_jy_j\right\|
		\leq C_{\mathrm{ac}}N\|c\|_{E_y}.
		\]

		Set $\Pi_0=1$ and
		$\Pi_j=\beta_1\cdots\beta_j=\|T^nx\|/a_{n-j}$ for $1\leq j\leq n$. Choose
		$L\geq0$ such that $3^{L+1}\leq n+1<3^{L+2}$. For $0\leq\ell\leq L$, let
		$m_\ell=3^\ell$ and define
		\[
		z_\ell=\sum_{j=m_\ell}^{3m_\ell-1}\Pi_j^{-1}e_j,
		\qquad
		v_\ell=\sum_{j=0}^{m_\ell-1}\Pi_j^{-1}e_j.
		\]
		These vectors are well defined by the choice of $L$. If
		$m_\ell\leq k<2m_\ell$, then
		$B^k(\Pi_{k+j}^{-1}e_{k+j})=\Pi_j^{-1}e_j$ for every
		$0\leq j<m_\ell$. Hence the projection of $B^kz_\ell$ onto
		$\operatorname{span}\{e_0,\ldots,e_{m_\ell-1}\}$ is $v_\ell$. Since
		coordinate projections are contractive, summing over these values of $k$ gives, for $0\le \ell \le L$,
		
		\begin{align*}
			m_\ell\|v_\ell\|_{E_y}
			\leq
			\sum_{k=m_\ell}^{2m_\ell-1}\|B^kz_\ell\|_{E_y}\leq
			\sum_{k=0}^{2m_\ell-1}\|B^kz_\ell\|_{E_y}\leq
			2C_{\mathrm{ac}}m_\ell\|z_\ell\|_{E_y}.
		\end{align*}
		
		Let $Z_\ell=\|z_\ell\|_{E_y}$ and put
		$\alpha=2C_{\mathrm{ac}}\theta_{r,X}$.
		For $1\leq\ell\leq L$, the identity
		$v_\ell=e_0+\sum_{j=0}^{\ell-1}z_j$ and
		Lemma~\ref{lemRademacherDisjointCotype} give
		\begin{equation}\label{ineSL}
			\left(1+\sum_{j=0}^{\ell-1}Z_j^r\right)^{1/r}
			\leq \theta_{r,X}\|v_\ell\|_{E_y}
			\leq \alpha Z_\ell.
		\end{equation}
		The same inequality holds for $\ell=0$, with the convention
		that an empty sum is zero, since $v_0=e_0$ and
		$\theta_{r,X}\geq1$.
		
		Set $S_{-1}=1$ and $S_\ell=1+\sum_{j=0}^\ell Z_j^r$.
		FFor every $0\leq\ell\leq L$, \eqref{ineSL} gives
		$Z_\ell^r\geq\alpha^{-r}S_{\ell-1}$, and hence
		\[
		S_\ell=S_{\ell-1}+Z_\ell^r
		\geq(1+\alpha^{-r})S_{\ell-1}.
		\]
		Induction therefore gives
		$S_{\ell-1}\geq(1+\alpha^{-r})^\ell$ for $0\leq\ell\leq L$.
		Consequently, setting $\rho=(1+\alpha^{-r})^{1/r}>1$, we obtain, for $0\leq\ell\leq L$,
		\begin{equation*}\label{estiZL}
			Z_\ell\geq\alpha^{-1}\rho^\ell.
		\end{equation*}
		
		On the other hand, $\Pi_j^{-1}y_j=T^{n-j}x/a_n$. Therefore
		\[
		a_nZ_L
		=\mathbb E\left\|
		\sum_{j=3^L}^{3^{L+1}-1}\varepsilon_jT^{n-j}x
		\right\|.
		\]
		Reindexing the Rademacher variables and applying
		\eqref{eq:contraction-subset}, Jensen's inequality, and Proposition~\ref{prop:Cuny-randomized}, we obtain
		\begin{align*}
			a_nZ_L
			\leq \mathbb E\left\|
			\sum_{k=0}^n\varepsilon_kT^kx\right\|\leq
			\left(\mathbb E\left\|
			\sum_{k=0}^n\varepsilon_kT^kx\right\|^s\right)^{1/s}\leq C(n+1)^{1/s}\|x\|.
		\end{align*} By \eqref{estiZL}, it follows that
		\[
		\|T^nx\|
		=a_n
		\leq \frac{C(n+1)^{1/s}\|x\|}{Z_L}
		\leq C\alpha(n+1)^{1/s}\rho^{-L}\|x\|.
		\]
		Since $L>\log_3(n+1)-2$ and $\rho>1$, we have
		\[
		\rho^{-L}
		\leq \rho^2(n+1)^{-\log_3\rho}.
		\]
		Absorbing $\alpha\rho^2$ into the constant, we obtain
		\[
		\|T^nx\|\leq C(n+1)^{1/s-\eps}\|x\|,
		\]
		where $\eps=\log_3\rho
		=\frac1r\log_3(1+\alpha^{-r})>0$ only on $C_{\mathrm{ac}}$,
		$r$, and the cotype constants of $X$. 
		Taking the supremum over $\|x\|=1$ gives the desired estimate
		for $n\geq2$. Increasing $C$ if necessary also covers $n=0,1$.
		
	\end{proof}
	
	\begin{remark}\label{remLpPolynomialGap}
		Since $L^p(\Omega)$ has type $\min\{p,2\}$ and cotype
		$\max\{p,2\}$ for $1<p<\infty$, Theorem~\ref{thmPolynomialGap}
		implies that every absolutely Ces\`aro bounded operator $T$
		on $L^p(\Omega)$ satisfies, for every $n \ge 0$,
		\[
		\|T^n\|\leq C(n+1)^{1/\min\{p,2\}-\eps}
		\]
		for some $C>0$ and $\eps>0$.
	\end{remark}
	
	\section{Absolute Cesàro boundedness under individual eventual positivity}
	\label{secEventualPositivity}
	
	Let $E$ be a complex Banach lattice. We write $E_{\R}$ for its real part. Recall that
	$T\in\mathcal L(E)$ is \emph{individually eventually positive} if,
	for every $f\in E_+$, there exists $n_f\in\N_0$ such that
	$T^nf\geq0$ for every $n\geq n_f$. Eventual positivity has been studied for powers of operators \cite{GlueckEventualOperators}, although most of the literature concerns $C_0$-semigroups on Banach lattices, see, for instance, \cite{DanersGlueckKennedy}.
	For $g\in E_+$, we denote by
	\[
	E_g=\{h\in E:|h|\leq cg\text{ for some }c\geq0\}
	\]
	the principal ideal generated by $g$, equipped with the gauge norm
	\[
	\|h\|_g=\inf\{c\geq0:|h|\leq cg\}.
	\]
	Then $E_g$ is a Banach space, and its inclusion into $E$ is continuous.
	
	We shall use the following discrete form of
	\cite[Corollary~2.2]{ArnoldIEP}. 
	
	\begin{lemma}\label{lemEventualDomination}
		Let $T\in\mathcal L(E)$ be individually eventually positive, let $g\in E_+$,
		let $Y$ be a real Banach space, and let $J:Y\to E_{\R}$ be a bounded operator with
		$JY\subseteq E_g$. Then there exist $n_0\in\N_0$ and $C>0$ such that
		\[
		|T^nJy|\leq C\|y\|_Y T^ng
		\]
		for every $y\in Y$ and every $n\geq n_0$. Moreover, $T^ng\geq0$ for every
		$n\geq n_0$.
	\end{lemma}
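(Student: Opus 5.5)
The plan is to run a Baire category argument on $Y$, in the spirit of \cite[Corollary~2.2]{ArnoldIEP}. First I make $J$ bounded into $E_g$. The inclusion $E_g\hookrightarrow E$ is continuous and $J:Y\to E$ is bounded, so $J$ has closed graph as a map $Y\to E_g$. By the closed graph theorem there is $M\geq0$ with $|Jy|\leq M\|y\|_Y\,g$ for all $y\in Y$. I also fix $n_g$ with $T^ng\geq0$ for $n\geq n_g$; this already gives the final assertion of the lemma once $n_0\geq n_g$.

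Next comes the pointwise, non-uniform domination. For fixed $y\in Y$ put $c=M\|y\|_Y$. Since $Jy\in E_{\R}$ and $|Jy|\leq cg$, both $cg+Jy$ and $cg-Jy$ lie in $E_+$. Individual eventual positivity gives $n_y$ such that
\[
T^n(cg+Jy)\geq0 \quad\text{and}\quad T^n(cg-Jy)\geq0 \qquad (n\geq n_y).
\]
The positivity of these two vectors forces $T^nJy=\tfrac12\bigl(T^n(cg+Jy)-T^n(cg-Jy)\bigr)\in E_{\R}$. It also gives $-cT^ng\leq T^nJy\leq cT^ng$, and hence $|T^nJy|\leq cT^ng$ for $n\geq n_y$.

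To make the threshold uniform, for $k\in\N$ I set
\[
F_k=\{y\in Y:\ T^nJy\in E_{\R}\text{ and }-kT^ng\leq T^nJy\leq kT^ng\text{ for all }n\geq k\}.
\]
Each $F_k$ is closed, since $T^nJ$ is continuous and $E_{\R}$ and $E_+$ are closed. By the previous step $\bigcup_kF_k=Y$, taking $k\geq\max\{n_y,M\|y\|_Y\}$. Baire's theorem then yields $k$, $y_0$ and $\rho>0$ with $B(y_0,\rho)\subseteq F_k$. For $\|z\|_Y<\rho$, both $y_0+z$ and $y_0$ lie in $F_k$, and $T^nJz=T^nJ(y_0+z)-T^nJy_0$. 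Subtracting the two order bounds gives $-2kT^ng\leq T^nJz\leq 2kT^ng$, that is $|T^nJz|\leq 2kT^ng$, for all $n\geq k$. Rescaling an arbitrary $y\neq0$ to norm $<\rho$ then gives the claim with
\[
C=\frac{2k}{\rho}\ (\text{up to an arbitrarily small enlargement}),\qquad n_0=\max\{k,n_g\}.
\]

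I expect the delicate points to be bookkeeping rather than depth. One is that the defining condition of $F_k$ must be linear in a fixed constant $k$, rather than in $M\|y\|$, so that the difference trick works. The other is ensuring that $T^nJy$ is real, so that the two-sided order bound is equivalent to the modulus bound. Both are handled by building realness and the fixed constant $k$ into the definition of $F_k$.
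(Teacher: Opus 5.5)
Your proof is correct and follows the paper's proof. The closed graph step and the pointwise domination via $M\|y\|_Y g\pm Jy$ are identical. Where the paper invokes \cite[Theorem~2.1]{ArnoldIEP} (with $S_n=T^nJ$ and $W_n=E_{T^ng}$) to make the threshold uniform, you carry out the underlying Baire category argument directly with the closed sets $F_k$, which avoids the gauge-normed ideals $E_{T^ng}$.

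There is one minor bookkeeping slip. For $y=0$, your choice $k\geq\max\{n_y,M\|y\|_Y\}$ does not guarantee $0\in F_k$, since membership requires $T^ng\geq0$ for all $n\geq k$; including $n_g$ in that maximum fixes this.
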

	\begin{proof}
		By the closed graph theorem, $J:Y\to E_g$ is bounded. Hence there exists $M>0$ such that
		$|Jy|\leq M\|y\|_Yg$ for every $y\in Y$.
		Choose $n_g\in\N_0$ such that $T^ng\geq0$ for all $n\geq n_g$.
		Individual eventual positivity, applied to
		$M\|y\|_Yg\pm Jy$, yields
		\[
		|T^nJy|\leq M\|y\|_Y T^ng
		\]
		for all sufficiently large $n$, depending on $y$.
		The conclusion follows from \cite[Theorem~2.1]{ArnoldIEP},
		applied to $S_n=T^{n}J$ and $W_n=E_{T^{n}g}$,  $n\geq n_g$.
	\end{proof}
	\begin{thm}\label{thmEventualPositiveEndpoint}
		Let $(\Omega,\mathcal F,\mu)$ be a measure space, let $1<p<\infty$, and let
		$T\in\mathcal L(L^p(\Omega))$ be individually eventually positive. If $T$ is
		absolutely Ces\`aro bounded, then $T$ is $p$-absolutely Ces\`aro bounded.
	\end{thm}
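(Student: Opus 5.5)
The plan is to reduce, by a uniform boundedness argument, to a \emph{single} nonnegative vector whose whole orbit is positive. Then we rerun the positive-case argument of \cite[Theorem~2.1]{AbbarArnoldCoine} along that orbit, using Lemma~\ref{lemEventualDomination} wherever the original proof uses positivity of $T$.

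\textbf{Step 1 (reduction to individual vectors).} For $N\geq1$ define $A_N:L^p(\Omega)\to\ell^p_N(L^p(\Omega))$ by $A_Nf=(N^{-1/p}T^kf)_{0\leq k<N}$. Then $T$ is $p$-ACB if and only if $\sup_N\|A_N\|<\infty$. By the Banach--Steinhaus theorem it suffices to show $\sup_N\|A_Nf\|<\infty$ for each fixed $f$. Since $\|A_N(\cdot)\|$ is a seminorm, we may split $f$ into real and imaginary parts and then into positive and negative parts. So we may assume $f\geq0$. Choose $n_f$ such that $T^nf\geq0$ for $n\geq n_f$. The first $n_f$ terms contribute at most $n_f\max_{k<n_f}\|T^k\|^p\|f\|^p/N$. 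It therefore remains to bound $\frac1N\sum_{k<N}\|T^kg\|^p$ for $g=T^{n_f}f$. This vector satisfies $T^ng\geq0$ for all $n\geq0$.

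\textbf{Step 2 (domination substitute for positivity).} The positive-case proof only uses positivity of $T$ through inequalities of the form $|T^nh|\leq T^n|h|$, applied to finitely many test functions built from the orbit of $g$. These are typically truncations or products such as $(T^jg)^{p-1}$-type weights, or $\mathbf 1_A T^jg$. I will enclose all such test functions in a principal ideal $E_G$, where $G$ is a suitable positive function built from finitely many orbit elements, for instance $G=\sum_{j\leq m}2^{-j}T^jg/(1+\|T^jg\|)$ or a truncation of $g$. I then apply Lemma~\ref{lemEventualDomination} with $Y=(E_G)_{\R}$ and $J$ the inclusion. This yields $n_0$ and $C$ such that $|T^nh|\leq C\|h\|_GT^nG$ for all $n\geq n_0$, uniformly in $h\in E_G$. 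Combined with positivity of the orbit of $G$ (also from the lemma), this replaces $|T^nh|\leq T^n|h|$ up to a constant and a finite shift $n_0$. The finite shift is again harmless, since it costs only $O(1/N)$ after division by $N$.

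\textbf{Step 3 (rerunning the positive argument).} With this substitute, I would follow the duality/convexity argument of \cite{AbbarArnoldCoine}. The key estimate there bounds $\sum_k\|T^kg\|_p^p$ by absolute Ces\`aro boundedness applied to a single well-chosen vector, using the lattice identity $\|T^kg\|_p^p=\int (T^kg)^{p-1}T^kg\,d\mu$ for positive $T^kg$ and monotonicity under $T^n$. Each use of monotonicity is replaced by the domination from Step~2, and each use of positivity of $T^kg$ is already available.

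\textbf{Main obstacle.} The hard part is uniformity. The lemma gives $n_0$ and $C$ depending on $G$, while the positive argument may need test vectors depending on $N$. My first attempt is to arrange that all test vectors needed for every $N$ lie in one fixed ideal $E_G$ with $G$ depending only on $g$. Failing that, I would let the bound depend on $g$, which is enough, since Step~1 only requires finiteness of $\sup_N\|A_Ng\|$ for each $g$ rather than a uniform constant. This per-vector freedom is exactly why the Banach--Steinhaus reduction is done first.
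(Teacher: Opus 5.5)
\textbf{There is a genuine gap: uniformity in $N$ is never established, and Step~3 contains no actual argument.}

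Your Step~1 is essentially the paper's reduction. The tool in Step~2 is also sound. Take the inclusion $(E_G)_{\R}\hookrightarrow L^p(\Omega;\R)$ with $G\geq\bigl(\sum_k|h_k|^p\bigr)^{1/p}$, and take a supremum over a countable dense subset of the unit ball of $\ell^{p'}$. This gives the vector-valued domination $\bigl(\sum_k|T^nh_k|^p\bigr)^{1/p}\le C'\,T^nG$ for $n\ge n_0$, which is what the paper extracts from Lemma~\ref{lemEventualDomination}.

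The gap is the uniformity issue you flag yourself, and your fallback does not resolve it.
\begin{itemize}
\item The test functions controlling the dyadic block $I_m=\{2^m,\dots,2^{m+1}-1\}$ are built from the orbit on that block. With $G=G_m$, the constants $C(G_m)$ and $n_0(G_m)$ therefore vary with $m$. An estimate of $B_m=2^{-m}\sum_{n\in I_m}\|T^nf\|_p^p$ in terms of them says nothing about $\sup_mB_m$. The freedom left by Step~1 is dependence on $f$ (or $g$), not on $N$, so ``letting the bound depend on $g$'' is not available.
\item A single $G$ fixed in advance does not work either. Take $G=\bigl(\sum_kw_k^pv_k^p\bigr)^{1/p}$ with $v_k=T^kf/\|T^kf\|_p$. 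Then $G\in L^p$ forces $\sum_kw_k^p<\infty$. The natural bound $2^{-m/p}/\min_{k\in I_m}w_k$ on the gauge norm of the normalized block test function $2^{-m/p}\bigl(\sum_{k\in I_m}v_k^p\bigr)^{1/p}$ is then unbounded in $m$. So these test functions cannot be expected to be uniformly bounded in $E_G$.
\end{itemize}
The missing idea is to make the weights a free parameter and apply the uniform boundedness principle a second time, on the weight space rather than on $L^p(\Omega)$:
\begin{itemize}
\item For arbitrary $b\in\ell^p$, set $w_k=|b_m|2^{-m/p}$ on $I_m$ and build $G_b$ as above.
\item Prove $|b_m|^pB_{m+1}\le C(b)$ for all large $m$, where $C(b)$ absorbs the $b$-dependent constants $C$ and $n_0$ from the lemma.
\item Apply Banach--Steinhaus to the functionals $b\mapsto B_{m+1}^{1/p}b_m$ on $\ell^p$ to obtain $\sup_mB_m<\infty$.
\end{itemize}
A gliding-hump choice of $b$ against a putative blow-up of $B_m$ is the same argument in another form.

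Step~3 also does not yet contain an argument. It defers to the positive case without identifying a mechanism, and the identity $\|T^kg\|_p^p=\int(T^kg)^{p-1}T^kg\,d\mu$ plus monotonicity is not what drives the estimate. In the paper, positivity enters only through the $\ell^p$-valued domination $\bigl(\sum_kw_k^p(T^jv_k)^p\bigr)^{1/p}\le C\,T^jG_b$ for $j\ge n_0$, holding simultaneously for the whole orbit. The paper then proceeds as follows:
\begin{itemize}
\item Set $F_j=(w_kT^{j+k}f/\|T^kf\|_p)_k\in L^p(\Omega;\ell^p)$, and use that the right shift $R$ is an isometry of $\ell^p$.
\item Minkowski's inequality for $\sum_{j=n_0}^{n_0+L-1}R^jF_j$, together with absolute Ces\`aro boundedness applied to $T^{n_0}G_b$, gives $\sum_n\|T^nf\|_p^pH_{n,L}^p\le C_bL^p$, where $H_{n,L}=\sum_{n_0\le n-k\le n_0+L-1}w_k/\|T^kf\|_p$.
\item Cauchy--Schwarz together with $\sum_{k<2M}\|T^kf\|_p\le 2C_{\mathrm{ac}}M$ gives $H_{n,3M}\ge|b_m|M^{1-1/p}/(8C_{\mathrm{ac}})$ for $2M\le n<4M$, where $M=2^m$.
\item This yields $|b_m|^pB_{m+1}\le C(b)$.
\end{itemize}
Without a concrete mechanism of this kind, and without the second uniform boundedness step over $b\in\ell^p$, your proposal does not prove the theorem.
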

	
	\begin{proof}
		By dyadic decomposition and the uniform boundedness principle,
		it suffices to prove that, for every $f\in L^p(\Omega)_+$ with
		$\|f\|_p=1$,
		\[
		\sup_{m\geq0}B_m<\infty,
		\qquad
		B_m:=2^{-m}\sum_{n=2^m}^{2^{m+1}-1}\|T^nf\|_p^p,
		\]
		since every element of $L^p(\Omega)$ is a linear combination
		of positive functions.
		
		Fix such an $f$, and choose $n_f\in\N_0$ such that
		$T^nf\geq0$ for every $n\geq n_f$.
		If $T^kf=0$ for some $k$, the desired bound is immediate.
		We may therefore assume that $\|T^kf\|_p>0$ for every $k\geq0$.
		
		Choose $m_0\geq0$ such that $2^{m_0}\geq n_f+1$, and let
		$b=(b_m)_{m\geq m_0}\in\ell^p$ be arbitrary. Put
		$I_m=\{2^m,\ldots,2^{m+1}-1\}$ and define
		\[
		w_k:=
		\begin{cases}
			|b_m|2^{-m/p}, & \text{if }k\in I_m\text{ for some }m\geq m_0,\\
			0, & \text{otherwise}.
		\end{cases}
		\] Then
		$\sum_{k\geq0}w_k^p=\sum_{m\geq m_0}|b_m|^p<\infty$.
		Set
		\[
		v_k:=
		\begin{cases}
			\displaystyle\frac{T^kf}{\|T^kf\|_p}, & \text{if }w_k\neq0,\\
			0, & \text{otherwise}.
		\end{cases}
		\] Then $v_k\geq0$
		for every $k\geq0$, and $\|v_k\|_p=1$ whenever $w_k\neq0$. Hence
		\[
		g:=\left(\sum_{k=0}^\infty w_k^pv_k^p\right)^{1/p}
		\]
		belongs to $L^p(\Omega)_+$ and satisfies
		$\|g\|_p^p=\sum_{k\geq0}w_k^p$.
		
		Define the real-linear operator
		\[
		J:\ell^{p'}(\N_0;\R)\longrightarrow L^p(\Omega;\R),
		\qquad
		J\alpha:=\sum_{k=0}^\infty\alpha_kw_kv_k,
		\]
		where the series converges absolutely in $L^p(\Omega)$, since
		H\"older's inequality gives
		\[
		\sum_{k=0}^\infty\|\alpha_kw_kv_k\|_p
		\leq\sum_{k=0}^\infty|\alpha_k|w_k
		\leq\|\alpha\|_{\ell^{p'}}\|w\|_{\ell^p}.
		\]Moreover, H\"older's inequality gives
		\[
		|J\alpha|
		\leq \sum_{k=0}^\infty|\alpha_k|w_kv_k
		\leq \|\alpha\|_{\ell^{p'}}
		\left(\sum_{k=0}^\infty w_k^pv_k^p\right)^{1/p}
		=\|\alpha\|_{\ell^{p'}}g,
		\]
		where the inequalities are understood in the Banach lattice $L^p(\Omega)$. Hence
		$J:\ell^{p'}(\N_0;\R)\to L^p(\Omega;\R)$ is bounded and its range is
		contained in $E_g$. Lemma~\ref{lemEventualDomination} yields
		$n_0\in\N_0$ and $C>0$ such that
		\[
		|T^jJ\alpha|\leq C\|\alpha\|_{\ell^{p'}}T^jg
		\]
		for every $\alpha\in\ell^{p'}(\N_0;\R)$ and every $j\geq n_0$.
		If $w_k\neq0$, then $k\geq n_f$ and hence
		$T^jv_k=T^{j+k}f/\|T^kf\|_p\geq0$ for every $j\geq n_0$. The $\ell^p$--$\ell^{p'}$ duality therefore gives for $j\ge n_0$
		\[
		\left(\sum_{k=0}^\infty w_k^pT^jv_k^p\right)^{1/p}
		\leq CT^jg.
		\]

		For $j\geq n_0$, let $F_j\in L^p(\Omega;\ell^p(\N_0))$ be given by \[
		F_j:=(w_kT^{j+k}f/\|T^kf\|_p)_{k\geq0}\] and, let $R$ be the right shift on
		$\ell^p(\N_0)$. By the preceding domination estimate,
		\[
		\|F_j\|_{\ell^p}
		:=
		\left(
		\sum_{k=0}^\infty
		\frac{w_kT^{j+k}f}{\|T^kf\|_p}^p
		\right)^{1/p}
		=
		\left(\sum_{k=0}^\infty w_k^pT^jv_k^p\right)^{1/p}
		\leq CT^jg,
		\]
		where the inequality is understood in the Banach lattice
		$L^p(\Omega)$. Hence, for every
		$L\geq1$, since $R$ is an isometry on $\ell^p(\N_0)$, Minkowski's inequality gives
		\[
		\left\|\sum_{j=n_0}^{n_0+L-1}R^jF_j\right\|_{\ell^p}
		\leq \sum_{j=n_0}^{n_0+L-1}\|R^jF_j\|_{\ell^p}
		= \sum_{j=n_0}^{n_0+L-1}\|F_j\|_{\ell^p}
		\leq C\sum_{j=n_0}^{n_0+L-1}T^jg.
		\]
		For every $n\geq0$, the $n$-th coordinate of the sum is
		\[
		\left(\sum_{j=n_0}^{n_0+L-1}R^jF_j\right)_n
		=
		\sum_{\substack{n_0\leq j\leq n_0+L-1\\j\leq n}}
		\frac{w_{n-j}T^nf}{\|T^{n-j}f\|_p}
		=T^nf\,H_{n,L},
		\]
		where, setting $k=n-j$,
		\[
		H_{n,L}
		:=
		\sum_{\substack{k\geq0\\n_0\leq n-k\leq n_0+L-1}}
		\frac{w_k}{\|T^kf\|_p}.
		\]
		Taking $L^p$-norms and applying Fubini's theorem, we obtain
		\begin{align*}
			\sum_{n=0}^\infty \|T^nf\|_p^pH_{n,L}^p
			&=\int_\Omega\sum_{n=0}^\infty
			|T^nf|^pH_{n,L}^p\,d\mu\\
			&=\left\|
			\left\|\sum_{j=n_0}^{n_0+L-1}R^jF_j\right\|_{\ell^p}
			\right\|_p^p\\
			&\leq C^p
			\left\|\sum_{j=n_0}^{n_0+L-1}T^jg\right\|_p^p.
		\end{align*}
		Moreover, absolute Ces\`aro boundedness gives
		$\sum_{j=n_0}^{n_0+L-1}\|T^jg\|_p
		\leq C_{\mathrm{ac}}L\|T^{n_0}g\|_p$. Consequently, there exists $C_b>0$,
		independent of $L$, such that for $L \ge 1 $
		\begin{equation}\label{ineTnHnL}
			\sum_{n=0}^\infty \|T^nf\|_p^pH_{n,L}^p\leq C_b L^p.
		\end{equation}
		
		Fix $m\geq m_0$ sufficiently large that $M:=2^m\geq2(n_0+1)$,
		and take $L=3M$.
		If $2M\leq n\leq4M-1$ and $M\leq k\leq2M-n_0-1$, then
		\[
		n_0+1\leq n-k\leq3M-1\leq n_0+3M-1.
		\]
		Thus every such $k$ occurs in the sum defining $H_{n,3M}$.
		Moreover, $k\in I_m$, so that $w_k=|b_m|M^{-1/p}$.
		Consequently,
		\[
		H_{n,3M}
		\geq |b_m|M^{-1/p}
		\sum_{k=M}^{2M-n_0-1}\frac{1}{\|T^kf\|_p}.
		\]
		There are $M-n_0$ terms in this sum. By Cauchy--Schwarz,
		\[
		(M-n_0)^2
		\leq
		\left(\sum_{k=M}^{2M-n_0-1}\|T^kf\|_p\right)
		\left(\sum_{k=M}^{2M-n_0-1}\frac{1}{\|T^kf\|_p}\right).
		\]
		Hence
		\[
		H_{n,3M}
		\geq |b_m|M^{-1/p}
		\frac{(M-n_0)^2}
		{\displaystyle\sum_{k=M}^{2M-n_0-1}\|T^kf\|_p}.
		\]
		Since $\sum_{k=M}^{2M-n_0-1}\|T^kf\|_p\leq2C_{\mathrm{ac}}M$ and $M-n_0\geq M/2$, we obtain
		\[
		H_{n,3M}\geq \frac{|b_m|M^{1-1/p}}{8C_{\mathrm{ac}}}.
		\]
		Restricting the sum in the estimate \eqref{ineTnHnL} to $2M\leq n\leq4M-1$, we obtain
		$|b_m|^pB_{m+1}\leq C C_b$ for all sufficiently large $m$. Thus
		\[
		\sup_{m\geq m_0}|b_m|^pB_{m+1}<\infty
		\]
		for every $b\in\ell^p$. The uniform boundedness principle, applied to the
		coordinate functionals
		$b\mapsto B_{m+1}^{1/p}b_m$, now gives
		$\sup_{m\geq m_0}B_{m+1}<\infty$. Since only finitely many indices remain,
		$\sup_{m\geq0}B_m<\infty$.
		
		The reduction at the beginning of the proof now gives the conclusion.
	\end{proof}
	
	\begin{remark}
		For positive operators, Theorem~\ref{thmEventualPositiveEndpoint}
		recovers \cite[Theorem~2.1]{AbbarArnoldCoine}: a positive operator
		on $L^p(\Omega)$, $1<p<\infty$, is absolutely Ces\`aro bounded
		if and only if it is $p$-absolutely Ces\`aro bounded.
		The reverse implication follows from Jensen's inequality.
	\end{remark}

	\begin{cor}\label{corEventualPositivePolynomialGrowth}
		Let $1<p<\infty$ and let $T$ be individually eventually positive and absolutely
		Ces\`aro bounded on $L^p(\Omega)$. Then there exist $C>0$ and
		$\eps\in(0,1/p)$ such that
		\[
		\|T^n\|\leq C(n+1)^{1/p-\eps}
		\qquad (n\geq0).
		\]
	\end{cor}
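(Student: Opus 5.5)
The corollary follows by chaining Theorem~\ref{thmEventualPositiveEndpoint} with the growth estimate of Cohen, Cuny, Eisner and Lin. First, Theorem~\ref{thmEventualPositiveEndpoint} applies directly, since $T$ is individually eventually positive and absolutely Ces\`aro bounded on $L^p(\Omega)$ with $1<p<\infty$. It shows that $T$ is $p$-absolutely Ces\`aro bounded. Second, \cite[Proposition~3.1]{CohenCunyEisnerLin} asserts that every $p$-absolutely Ces\`aro bounded operator on a Banach space satisfies $\|T^n\|=O(n^{1/p-\eps})$ for some $\eps>0$. This is the same input already used in the proof of Theorem~\ref{thmACBSelfImprovement}, where it was recorded as $\|T^k\|^p=O((k+1)^{1-\delta})$ with $\delta\in(0,1)$.

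It then remains to put the estimate in the stated form. Writing $\delta=p\eps$, the condition $\delta\in(0,1)$ is exactly $\eps\in(0,1/p)$. If the cited proposition produced a larger $\eps$, we would simply decrease it, since $(n+1)^{1/p-\eps}$ increases as $\eps$ decreases. The big-$O$ statement controls $\|T^n\|$ only for large $n$. Increasing $C$ covers the finitely many remaining indices, because each $\|T^n\|$ is finite and $(n+1)^{1/p-\eps}\geq1$. This gives $\|T^n\|\leq C(n+1)^{1/p-\eps}$ for all $n\geq0$.

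There is no real obstacle; all the substantive work is contained in Theorem~\ref{thmEventualPositiveEndpoint}. The only care needed is to apply \cite[Proposition~3.1]{CohenCunyEisnerLin} with exponent $p$, not with exponent $1$. For $p<2$, Theorem~\ref{thmPolynomialGap} would give the same exponent, since the type is $p$. For $p>2$, however, the type is only $2$, and the $p$-absolute Ces\`aro bound is what yields the stronger exponent $1/p-\eps$.
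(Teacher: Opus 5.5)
Your proposal is correct and is exactly the paper's argument: Theorem~\ref{thmEventualPositiveEndpoint} gives $p$-absolute Ces\`aro boundedness, and \cite[Proposition~3.1]{CohenCunyEisnerLin} then gives the growth bound. Your extra bookkeeping (shrinking $\eps$ into $(0,1/p)$ and enlarging $C$ to cover small $n$) and your comparison with Theorem~\ref{thmPolynomialGap} for $p<2$ versus $p>2$ are both accurate.
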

	
	\begin{proof}
		By Theorem~\ref{thmEventualPositiveEndpoint}, $T$ is $p$-absolutely Ces\`aro
		bounded. The conclusion follows from
		\cite[Proposition~3.1]{CohenCunyEisnerLin}.
	\end{proof}

	\section{Failure of the Katznelson--Tzafriri conclusion under absolute Cesàro boundedness}
	
	In this section, we give a negative answer to
	\cite[Section~6, Question~7]{CohenCunyEisnerLin}
	and to the case $\alpha=1$ of
	\cite[Question~3.1]{AbadiasBonilla}
	by constructing, for every $1\leq p<\infty$, a
	$p$-absolutely Ces\`aro bounded operator $T$ on $\ell^1$
	such that $\sigma(T)\subseteq\mathbb D\cup\{1\}$ but
	$\limsup_{n\to\infty}\|T^n(I-T)\|=\infty$.
	\begin{prop}\label{prop:ACB-counterexample-KT}
		For every $1\leq p<\infty$, there exists a
		$p$-absolutely Ces\`aro bounded operator $T$ on $\ell^1$ such that
		\[
		\sigma(T)\subseteq\mathbb D\cup\{1\},
		\qquad
		\limsup_{n\to\infty}\|T^n(I-T)\|=\infty.
		\]
		Moreover, $T$ can be chosen so that $C_{p,\mathrm{ac}}(T)\leq2$.
	\end{prop}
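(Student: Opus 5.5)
The plan is to build $T$ as an $\ell^1$-direct sum of finite-dimensional blocks
\[
T=\bigoplus_{k\geq1}T_k,\qquad T_k=(1-\delta_k)I+\delta_kS_k ,
\]
where each $S_k$ acts on $\ell^1_{d_k}$, is $p$-absolutely Ces\`aro bounded with $C_{p,\mathrm{ac}}(S_k)\leq2$, and is nilpotent (a finite weighted backward shift). I would use $\delta_k\downarrow0$, with $\delta_k$ and $S_k$ chosen together.

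The first step is the convexity lemma announced in the introduction. For $\delta\in[0,1]$ one has $C_{p,\mathrm{ac}}((1-\delta)I+\delta S)\leq C_{p,\mathrm{ac}}(S)$. To prove it, write
\[
((1-\delta)I+\delta S)^k=\sum_j\pi_k(j)S^j
\]
with binomial weights $\pi_k(j)$. Convexity of $t\mapsto t^p$ gives $\|((1-\delta)I+\delta S)^kx\|^p\leq\sum_j\pi_k(j)\|S^jx\|^p$. Summing over $k<N$, the mass $\sum_{k<N}\pi_k(j)$ placed on each index $j$ is at most $1/\delta$, and it is supported on $j<N$. The task is then to show that the resulting weighted average of the $\|S^jx\|^p$ is dominated by Ces\`aro averages of $S$. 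The weights $\sum_{k<N}\pi_k(j)$ are decreasing in $j$, so an Abel summation should write this average as a convex combination of Ces\`aro means of $(\|S^jx\|^p)_j$, each bounded by $C_{p,\mathrm{ac}}(S)\|x\|^p$. Since the $\ell^1$-sum of the blocks satisfies $\|T^kx\|^p\leq\bigl(\sum_i\|T_i^kx_i\|\bigr)^p$, I would instead take the direct sum in $\ell^p$-fashion only through the renormed bound, or argue blockwise with disjointly supported vectors. I expect to need care here: $p$-ACB of an $\ell^1$-sum is not automatic for $p>1$. The fallback is to check directly that, for $x=\sum x_i$, Minkowski in $\ell^1$ together with the blockwise $p$-bounds and Jensen gives $C_{p,\mathrm{ac}}(T)\leq\sup_kC_{p,\mathrm{ac}}(T_k)$.

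The second step is the spectrum. Each $\sigma(T_k)=\{1-\delta_k\}\subseteq\mathbb D$, and these points accumulate only at $1$. It remains to bound the resolvents uniformly in $k$ for $|\lambda|\geq1$, $\lambda\neq1$. Fix $\eta>0$ and $|\lambda-1|\geq\eta$. Then
\[
(\lambda-T_k)^{-1}=(\lambda-1+\delta_k)^{-1}\sum_{j\geq0}\Bigl(\frac{\delta_k}{\lambda-1+\delta_k}\Bigr)^jS_k^j .
\]
Absolute Ces\`aro boundedness gives $\|S_k^j\|\leq2(j+1)$ uniformly in $k$. Hence the series converges uniformly in $k$ as soon as $\delta_k\leq\eta/4$. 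The finitely many remaining blocks are invertible at such $\lambda$ since their spectra lie in $\mathbb D$. Therefore $\sup_k\|(\lambda-T_k)^{-1}\|<\infty$, so $\lambda\notin\sigma(T)$, and hence $\sigma(T)\subseteq\mathbb D\cup\{1\}$.

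The third step, which I expect to be the main obstacle, is to make $\|T_k^{n_k}(I-T_k)\|\geq k$ for suitable $n_k$. We have $T^n(I-T)=\delta T^n(I-S)$, and $T^n$ is a binomial (Poisson-like) average of the $S^j$ around $j\approx t=n\delta$ with window of width $\sqrt t$. On $\ell^1$, applying this to the top basis vector of a weighted shift with orbit norms $a_j$ gives roughly $\sum_j|\Delta(\pi(j)a_j)|$. For a smooth profile this is of order $a_t/\sqrt t$. Growth $a_j\sim j^\alpha$ with $\alpha<1/p$ (forced by the Cohen--Cuny--Eisner--Lin bound) therefore only suffices when $p<2$. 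For general $p$ I would exploit the non-reflexivity of $\ell^1$ differently. I would choose $S_k$ whose orbit of $e_{d_k}$ oscillates at the scale of the window, so that $I-S$ is not smoothed out. Concretely, I would take weights whose ratios $a_{j+1}/a_j$ alternate between two values, or use a block such as $S=-(I+N)$ restricted suitably, which makes $T\approx(1-2\delta)I$ plus a nilpotent perturbation. The goal is $\|T_k^n\|\gtrsim k/\delta_k$ at some $n$, while the $p$-ACB constant of $S_k$ is verified by hand via the sum formula for weighted shifts used in Example~\ref{exSharpACBExponent}. Once such blocks exist, $\delta_k$ and $d_k$ are chosen inductively, and the uniform boundedness principle is not needed for the stated $\limsup$.
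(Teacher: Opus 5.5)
Your Steps 1 and 2 are fine. The convexity estimate is the paper's Lemma~\ref{lem:ACB-convex-identity}, your Minkowski fallback is exactly how the paper treats the $\ell^1$-sum, and the resolvent argument works for nilpotent blocks.

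The gap is Step 3, and it cannot be closed inside your framework. No choice of blocks $T_k=(1-\delta_k)I+\delta_kS_k$ with $C_{p,\mathrm{ac}}(S_k)\leq 2$ and $\delta_k\downarrow0$ makes $\|T^n(I-T)\|$ unbounded. With $\pi_n(m)=\binom nm\delta_k^m(1-\delta_k)^{n-m}$ and $\pi_n(-1)=0$,
\[
T_k^n(I-T_k)=\delta_k\sum_{m=0}^{n+1}\bigl(\pi_n(m)-\pi_n(m-1)\bigr)S_k^m .
\]
By the triangle inequality only the numbers $\|S_k^mx\|$ matter, so oscillating weights, sign changes or non-positive blocks cannot help. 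For $\delta_k\leq1/2$, standard binomial estimates give $|\pi_n(m)-\pi_n(m-1)|\lesssim 1/(n\delta_k+1)$ for all $m$. Beyond the mode, the differences for $m\geq M$ are at most $\pi_n(M-1)$. Hence the non-increasing envelope $h$ of these differences satisfies $\sum_M h(M)\leq C_0$ with $C_0$ universal. Abel summation against $\sum_{m\le M}\|S_k^mx\|\leq 2(M+1)\|x\|$ then gives
\[
\sup_n\|T_k^n(I-T_k)\|\leq 2C_0\delta_k .
\]
The finitely many blocks with $\delta_k>1/2$ are finite-dimensional with spectrum in $\mathbb D$, so they are harmless too. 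Your target $\|T_k^n\|\gtrsim k/\delta_k$ is therefore the wrong one. Binomial averaging makes consecutive powers nearly equal, so growth of $\|T_k^n\|$ never becomes growth of $T_k^n(I-T_k)$. In particular, the unrotated construction fails for $p<2$ as well, contrary to your heuristic.

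The missing idea is a non-real scalar factor. The paper takes $T_j=r_je^{i\theta_j}S_{\delta_j}$ with $S_\delta=(1-\delta)I+\delta W$. Here $W$ is the positive weighted shift of Example~\ref{exSharpACBExponent} on $\ell^1$ with $\alpha=1/(2p)$, and $\theta_j=\delta_j=1/j$, $n_j=\lceil j^{8p+1}\rceil$, $r_j=e^{-1/n_j}$.

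This choice keeps everything you already have:
\begin{itemize}
\item A scalar of modulus at most $1$ does not increase the $p$-ACB constant.
\item The factor $r_j$ pushes $\sigma(T_j)$ into $\mathbb D$.
\item Since $T_j\to I$ in norm, one still gets $\sigma(T)\subseteq\mathbb D\cup\{1\}$.
\end{itemize}

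The gain is that $I-T_j$ now contains $(1-r_je^{i\theta_j})I$, of size about $\theta_j$ in the imaginary direction. Since $S_{\delta_j}$ is positive, taking imaginary parts gives
\[
\|T_j^{n}(I-T_j)e_d\|_1\geq r_j^{n+1}\sin\theta_j\,\|S_{\delta_j}^{n+1}e_d\|_1 .
\]
The growth $\|S_\delta^{n}e_d\|_1\gtrsim (n\delta)^{1/(4p)}$, obtained with $d\approx n\delta+3\sqrt{n\delta}$, then transfers directly and gives a lower bound of order $j$.

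The same twist rescues your finite-dimensional set-up. Replace $T_k$ by $e^{i\theta_k}T_k$, whose spectrum $\{e^{i\theta_k}(1-\delta_k)\}$ lies in $\mathbb D$, and take $S_k$ to be the restriction of $W$ to $\operatorname{span}\{e_1,\ldots,e_{d_k}\}$.
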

	\begin{remark}
		By the uniform boundedness principle, there exists $x\in X$
		such that
		\[
		\sup_{n\geq0}\|T^n(I-T)x\|=\infty.
		\]
		In particular, $T^n(I-T)$ does not converge strongly to zero.
	\end{remark}
	We start with a lemma which is of independent interest. 
	\begin{lemma}\label{lem:ACB-convex-identity}
		Let $1\leq p<\infty$ and let $T$ be $p$-absolutely Ces\`aro bounded
		on a Banach space $X$.
		For every $\delta\in[0,1]$, the operator
		\[
		S_\delta=(1-\delta)I+\delta T
		\]
		is $p$-absolutely Ces\`aro bounded, with
		\[
		C_{p,\mathrm{ac}}(S_\delta)
		\leq C_{p,\mathrm{ac}}(T).
		\]
	\end{lemma}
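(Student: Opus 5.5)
The plan is to write the powers of $S_\delta$ as averages of powers of $T$ with binomial weights, push the $p$-th power inside by convexity, and then use an Abel summation argument. The cases $\delta=0$ (where $S_0=I$) and $\delta=1$ are trivial, so fix $\delta\in(0,1)$. By the binomial formula,
\[
S_\delta^k=\sum_{j=0}^k\pi_{k,j}T^j,
\qquad
\pi_{k,j}=\binom kj(1-\delta)^{k-j}\delta^j .
\]
The weights $\pi_{k,j}$ are nonnegative and sum to $1$ in $j$. Hence the triangle inequality and convexity of $t\mapsto t^p$ give
\[
\|S_\delta^kx\|^p\leq\sum_{j}\pi_{k,j}\|T^jx\|^p .
\]
Summing over $k<N$ gives
\[
\sum_{k=0}^{N-1}\|S_\delta^kx\|^p\leq\sum_{j\geq0}c_{N,j}\,a_j,
\qquad
c_{N,j}:=\sum_{k=0}^{N-1}\pi_{k,j},\quad a_j:=\|T^jx\|^p .
\]
It remains to show that $\sum_j c_{N,j}a_j\leq C_{p,\mathrm{ac}}(T)N\|x\|^p$.

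I will interpret the weights probabilistically. Let $B_k=\eta_1+\dots+\eta_k$ with $\eta_i$ i.i.d. Bernoulli$(\delta)$. Then $\pi_{k,j}=\mathbb P(B_k=j)$, so $c_{N,j}$ is the expected number of times $k<N$ with $B_k=j$. In particular:
\begin{itemize}
\item $\sum_j c_{N,j}=N$;
\item $c_{N,j}=0$ for $j\geq N$, so all sums below are finite.
\end{itemize}

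The key structural fact I need is that $j\mapsto c_{N,j}$ is nonincreasing. I will prove it by a coupling argument. The walk $(B_k)$ is nondecreasing with unit jumps. Let $\tau_j$ be the first time it reaches level $j$, and let $G_j$ be its holding time there; the $G_j$ are i.i.d. geometric and independent of $\tau_j$. The occupation time of level $j$ before time $N$ is $\min\{G_j,(N-\tau_j)^+\}$. Since $\tau_{j+1}=\tau_j+G_j\geq\tau_j$ and $G_{j+1}$ has the same law as $G_j$ and is independent of $\tau_{j+1}$, the occupation of level $j+1$ is stochastically dominated by that of level $j$. This gives $c_{N,j+1}\leq c_{N,j}$. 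Alternatively, I would try to verify $c_{N,j}-c_{N,j+1}\geq0$ directly from the identity $\pi_{k+1,j+1}=(1-\delta)\pi_{k,j+1}+\delta\pi_{k,j}$, which telescopes over $k$ to
\[
\delta\,(c_{N,j}-c_{N,j+1})=\pi_{N,j+1}\geq0 .
\]
This identity is in fact an exact formula, so this route may be cleaner.

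Given monotonicity, Abel summation finishes the argument. Put $A_m=\sum_{j<m}a_j$; $p$-ACB of $T$ gives $A_m\leq C_{p,\mathrm{ac}}(T)\,m\|x\|^p$. Then
\[
\sum_j c_{N,j}a_j=\sum_{m\geq0}(c_{N,m}-c_{N,m+1})A_{m+1}\leq C_{p,\mathrm{ac}}(T)\|x\|^p\sum_{m\geq0}(c_{N,m}-c_{N,m+1})(m+1).
\]
The coefficients $c_{N,m}-c_{N,m+1}$ are nonnegative, which is exactly where monotonicity is used. Summing by parts once more, the last sum equals $\sum_m c_{N,m}=N$. This yields $C_{p,\mathrm{ac}}(S_\delta)\leq C_{p,\mathrm{ac}}(T)$.

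The main obstacle is the monotonicity of $c_{N,j}$ in $j$. Without it, the only available bound is $c_{N,j}\leq1/\delta$, which loses the constant. I would first try the telescoping identity above, and fall back on the coupling argument if needed.
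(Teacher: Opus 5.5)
Your proposal is correct and follows the paper's proof essentially step by step: binomial expansion with Jensen, monotonicity in $j$ of the summed binomial weights, and Abel summation against the partial sums $\sum_{j\le m}\|T^jx\|^p$ together with $\sum_j c_{N,j}=N$. Your telescoping identity $\delta(c_{N,j}-c_{N,j+1})=\pi_{N,j+1}$ is the consecutive difference of the paper's tail identity $\delta c_{N,k}=\sum_{\ell\geq k+1}\binom N\ell\delta^\ell(1-\delta)^{N-\ell}$. So the coupling fallback is not needed, although it is also valid.
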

	
	\begin{proof}
		The cases $\delta=0,1$ are immediate. Fix $0<\delta<1$,
		$N\geq1$ and $x\in X$. The binomial formula and Jensen's
		inequality give
		\[
		\frac1N\sum_{n=0}^{N-1}\|S_\delta^nx\|^p
		\leq\sum_{k=0}^{N-1}a_k\|T^kx\|^p,
		\]
		where
		\[
		a_k=\frac1N\sum_{n=k}^{N-1}
		\binom nk\delta^k(1-\delta)^{n-k}.
		\]
		Setting for fixed $k$, $F_k = 0$ and
		$F_n :=\sum_{\ell=k+1}^{n}\binom n\ell
		\delta^\ell(1-\delta)^{n-\ell}$ for $n>k$. Pascal's identity yields an expression for $F_{n+1}-F_n$. Summing it from $n=k$ to $N-1$ gives
		\[
		\sum_{n=k}^{N-1}\binom nk
		\delta^{k+1}(1-\delta)^{n-k}
		=\sum_{\ell=k+1}^{N}\binom N\ell
		\delta^\ell(1-\delta)^{N-\ell}.
		\]
		This shows that $(a_k)_{k=0}^{N-1}$ is non-increasing.
		Moreover,
		\[
		\sum_{k=0}^{N-1}a_k
		=\frac1N\sum_{n=0}^{N-1}
		\sum_{k=0}^{n}\binom nk\delta^k(1-\delta)^{n-k}
		=1.
		\]
		Setting $a_N=0$ and applying Abel's summation formula, we obtain
		\begin{align*}
			\sum_{k=0}^{N-1}a_k\|T^kx\|^p
			&=\sum_{j=0}^{N-1}(a_j-a_{j+1})
			\sum_{k=0}^{j}\|T^kx\|^p\\
			&\leq C_{p,\mathrm{ac}}\|x\|^p
			\sum_{j=0}^{N-1}(j+1)(a_j-a_{j+1})\\
			&=C_{p,\mathrm{ac}}\|x\|^p\sum_{k=0}^{N-1}a_k\\
			&=C_{p,\mathrm{ac}}\|x\|^p.
		\end{align*}
	\end{proof}
	\begin{proof}[Proof of Proposition \ref{prop:ACB-counterexample-KT} ]
		Let $W$ be the operator from Example~\ref{exSharpACBExponent}
		on $\ell^1(\mathbb N)$ with $\alpha=1/(2p)$.
		By \cite[Theorem~3.4(3) and the proof of Corollary~3.5]
		{AbbarArnoldCoine}, we have $C_{p,\mathrm{ac}}(W)=2$,
		so for every $x\in\ell^1(\mathbb N)$ and every $N\geq1$,
		\begin{equation*}\label{eq:W-ACB}
			\sum_{k=0}^{N-1}\|W^kx\|_1^p\leq2N\|x\|_1^p.
		\end{equation*}
		
		For $0<\delta<1$, put
		\[
		S_\delta=(1-\delta)I+\delta W.
		\]
		Then by Lemma~\ref{lem:ACB-convex-identity}, the operators
		$S_{\delta}$ are (uniformly) $p$-absolutely Ces\`aro bounded
		with $C_{p,\mathrm{ac}}(S_{\delta})
		\leq C_{p,\mathrm{ac}}(W)=2$.
		
		We next obtain a lower bound for the powers of $S_\delta$.
		Let $n\geq1$ satisfy $\mu:=n\delta\geq1$, and put
		\[
		d:=\left\lceil\mu+3\sqrt\mu\right\rceil.
		\]
		Since the vectors $W^ke_d$, $0\leq k<d$, have disjoint supports,
		the binomial formula gives
		\[
		\|S_\delta^ne_d\|_1
		=\sum_{k=0}^{\min(n,d-1)}
		\binom nk\delta^k(1-\delta)^{n-k}
		\left(\frac d{d-k}\right)^{1/(2p)}.
		\]
		
		The identity
		$
		\sum_{k=0}^{n}(k-\mu)^2
		\binom nk\delta^k(1-\delta)^{n-k}
		=n\delta(1-\delta)\leq\mu
		$
		implies that
		\begin{equation}\label{eq:binomial-central-mass}
			\sum_{\substack{0\leq k\leq n\\|k-\mu|\leq2\sqrt\mu}}
			\binom nk\delta^k(1-\delta)^{n-k}
			\geq\frac34.
		\end{equation}
		If $0\leq k\leq n$ and $|k-\mu|\leq2\sqrt\mu$, then,
		since $\mu+3\sqrt\mu-k\leq d-k\leq\mu+3\sqrt\mu-k+1$,
		\begin{equation*}
			\sqrt\mu\leq d-k\leq5\sqrt\mu+1\leq6\sqrt\mu.
		\end{equation*}
		Thus $k<d$ and
		\begin{equation}\label{eq:binomial-distance-bound}
			\left(\frac{d}{d-k}\right)^{1/(2p)}
			\geq\frac{\mu^{1/(4p)}}{6^{1/(2p)}}.
		\end{equation}
		Therefore, by \eqref{eq:binomial-central-mass} and
		\eqref{eq:binomial-distance-bound}, we obtain
		\begin{align}
			\|S_\delta^ne_d\|_1
			&\geq
			\sum_{\substack{0\leq k\leq n\\|k-\mu|\leq2\sqrt\mu}}
			\binom nk\delta^k(1-\delta)^{n-k}
			\left(\frac{d}{d-k}\right)^{1/(2p)}
			\label{eq:Sdelta-lower}\\
			&\geq\frac{\mu^{1/(4p)}}{6^{1/(2p)}}
			\sum_{\substack{0\leq k\leq n\\|k-\mu|\leq2\sqrt\mu}}
			\binom nk\delta^k(1-\delta)^{n-k}
			\notag\\
			&\geq\frac{3}{4\,6^{1/(2p)}}\mu^{1/(4p)}
			=:c_p(n\delta)^{1/(4p)}.
			\notag
		\end{align}
		For $j\geq2$, define
		\[
		\delta_j=\theta_j=\frac1j,
		\qquad
		n_j=\left\lceil j^{8p+1}\right\rceil,
		\qquad
		r_j=\exp(-1/n_j),
		\qquad
		T_j=r_je^{i\theta_j}S_{\delta_j}.
		\]
		On the space $
		X:=\left(\bigoplus_{j\geq 2}\ell^1(\mathbb N)\right)_{\ell^1},
		$
		which is isometrically isomorphic to $\ell^1(\mathbb N)$,
		let
		\[
		T:=\bigoplus_{j\geq2}T_j.
		\]
		The operators $T_j$ are uniformly bounded. Furthermore,
		Minkowski's inequality and $C_{p,\mathrm{ac}}(S_\delta)\leq2$
		imply that, for $x=(x_j)_{j\geq2}\in X$,
		\begin{align}\label{eq:direct-sum-Minkowski}
			\left(\sum_{n=0}^{N-1}\|T^nx\|_X^p\right)^{1/p}
			&=\left(\sum_{n=0}^{N-1}
			\left(\sum_{j\geq2}r_j^n
			\|S_{\delta_j}^nx_j\|_1\right)^p\right)^{1/p}\\
			&\leq\sum_{j\geq2}
			\left(\sum_{n=0}^{N-1}r_j^{np}
			\|S_{\delta_j}^nx_j\|_1^p\right)^{1/p} \notag\\
			&\leq(2N)^{1/p}\sum_{j\geq2}\|x_j\|_1
			=(2N)^{1/p}\|x\|_X. \notag
		\end{align}
		Thus $T$ is $p$-absolutely Ces\`aro bounded with
		$C_{p,\mathrm{ac}}(T)\leq2$.
		
		Since $S_{\delta_j}$ is absolutely Ces\`aro bounded,
		$r(S_{\delta_j})\leq1$, and hence
		\[
		\sigma(T_j)\subseteq r_j\overline{\mathbb D}
		\subset\mathbb D.
		\]
		Moreover,
		\begin{align*}
			\|T_j-I\|
			&=\bigl\|(r_je^{i\theta_j}-1)I
			+r_je^{i\theta_j}\delta_j(W-I)\bigr\|\\
			&\leq |r_je^{i\theta_j}-1|
			+r_j\delta_j\|W-I\|
			\xrightarrow[j\to\infty]{}0.
		\end{align*}
		Fix $\lambda\notin\mathbb D\cup\{1\}$. Each $\lambda I-T_j$
		is invertible, and the preceding convergence implies
		\[
		(\lambda I-T_j)^{-1}
		\xrightarrow[j\to\infty]{}(\lambda-1)^{-1}I
		\]
		in operator norm. Thus these inverses are uniformly bounded,
		and their direct sum defines a bounded inverse of $\lambda I-T$.
		Therefore $\sigma(T)\subseteq\mathbb D\cup\{1\}$.
		
		Finally, choose $\mu_j=(n_j+1)\delta_j$ and
		$d_j=\left\lceil\mu_j+3\sqrt{\mu_j}\right\rceil$.
		Since $S_{\delta_j}$ is positive, we obtain
		\begin{align*}
			\|T_j^{n_j}(I-T_j)e_{d_j}\|_1
			&=r_j^{n_j}
			\left\|S_{\delta_j}^{n_j}e_{d_j}
			-r_je^{i\theta_j}
			S_{\delta_j}^{n_j+1}e_{d_j}\right\|_1\\
			&\geq r_j^{n_j}
			\left\|\operatorname{Im}\left(
			S_{\delta_j}^{n_j}e_{d_j}
			-r_je^{i\theta_j}
			S_{\delta_j}^{n_j+1}e_{d_j}\right)\right\|_1\\
			&=r_j^{n_j+1}\sin(\theta_j)
			\|S_{\delta_j}^{n_j+1}e_{d_j}\|_1\\
			&\geq c_pr_j^{n_j+1}\sin(1/j)
			\left(\frac{n_j+1}{j}\right)^{1/(4p)},
		\end{align*}
		where the last inequality follows from \eqref{eq:Sdelta-lower}.
		Now
		\[
		r_j^{n_j+1}\geq e^{-2},
		\qquad
		\sin(1/j)\geq\frac1{2j},
		\qquad
		\left(\frac{n_j+1}{j}\right)^{1/(4p)}\geq j^2.
		\]
		Consequently,
		\[
		\|T^{n_j}(I-T)\|
		\geq \|T_j^{n_j}(I-T_j)e_{d_j}\|_1
		\geq \frac{c_pe^{-2}}2\,j
		\xrightarrow[j\to\infty]{}\infty.
		\]

	\end{proof}
	
	\begin{remark}
		The construction can be modified to obtain a single operator
		which is $p$-absolutely Ces\`aro bounded for every
		$1\leq p<\infty$, while still satisfying
		\[
		\sigma(T)\subseteq\mathbb D\cup\{1\},
		\qquad
		\limsup_{n\to\infty}\|T^n(I-T)\|=\infty.
		\]
		For $j\geq2$, let $W_j$ be the weighted backward shift on
		$\ell^1(\mathbb N)$ from Example~\ref{exSharpACBExponent}
		with $\alpha=1/j$, and set
		\[
		S_j=(1-1/j)I+j^{-1}W_j,
		\qquad
		n_j=j^{4j+1},
		\qquad
		r_j=e^{-1/n_j},
		\qquad
		T_j=r_je^{i/j}S_j.
		\]
		Define $T=\bigoplus_{j\geq2}T_j$ on
		$X=(\bigoplus_{j\geq2}\ell^1)_{\ell^1}$.
		
		Fix $1\leq p<\infty$. For $j\geq2p$, Example~\ref{exSharpACBExponent} and
		Lemma~\ref{lem:ACB-convex-identity} give
		\[
		C_{p,\mathrm{ac}}(T_j)
		\leq C_{p,\mathrm{ac}}(W_j)
		=\frac1{1-p/j}\leq2.
		\]
		Each of the remaining finitely many blocks is power bounded,
		since
		\[
		\|T_j^n\|\leq r_j^n(n+1)^{1/j}.
		\]
		Hence $\sup_{j\geq2}C_{p,\mathrm{ac}}(T_j)<\infty$.
		As in \eqref{eq:direct-sum-Minkowski}, Minkowski's inequality
		shows that $T$ is $p$-absolutely Ces\`aro bounded (the constants $C_{p,\mathrm{ac}}(T)$ are allowed to depend on $p$). Moreover, as in the proof of Proposition~\ref{prop:ACB-counterexample-KT},
		we have $\sigma(T)\subseteq\mathbb D\cup\{1\}$.
		
		Put $\mu_j=(n_j+1)/j$ and
		$d_j=\lceil\mu_j+3\sqrt{\mu_j}\rceil$.
		The same estimate as in the proof above gives
		\[
		\|S_j^{n_j+1}e_{d_j}\|_1
		\geq\frac{3}{4\,6^{1/j}}\mu_j^{1/(2j)}
		\geq\frac{3}{4\sqrt6}\,j^2,
		\]
		and then
		\[
		\|T^{n_j}(I-T)\|
		\geq r_j^{n_j+1}\sin(1/j)
		\|S_j^{n_j+1}e_{d_j}\|_1
		\geq\frac{3e^{-2}}{8\sqrt6}\,j
		\xrightarrow[j\to\infty]{}\infty.
		\]
		
	\end{remark}

	\section{Concluding remarks}
	
	The results above lead to several further questions concerning
	self-improvement and the asymptotic behavior of powers.
	
	Let $1<p<2$ and let $T$ be an absolutely Ces\`aro bounded operator
	on $L^p(\Omega)$. By Remark \ref{remLpWeakEndpoint}, $T$ is
	$q$-absolutely Ces\`aro bounded for every $1\leq q<p$, while
	Corollary~\ref{corEndpointLogarithm} gives, for every
	$x\in L^p(\Omega)$ and every $N\geq1$,
	\[
	\frac1N\sum_{k=0}^{N-1}\|T^kx\|_p^p
	\leq C\log(N+1)\|x\|_p^p.
	\]
	These estimates provide some progress towards the following
	question, raised by Cuny in \cite{CunyLp}, which remains open.

	\begin{q}
		Let $1<p<2$. Is every absolutely Ces\`aro bounded operator
		on $L^p(\Omega)$ necessarily $p$-absolutely Ces\`aro bounded?
	\end{q}
	
	Equivalently, can the logarithmic factor in the preceding estimate
	be removed? Theorem~\ref{thmEventualPositiveEndpoint} gives an
	affirmative answer under individual eventual positivity.
	The $q$-ACB estimates for $q<p$ do not directly yield
	$p$-absolute Ces\`aro boundedness, since the constants
	$C_{q,\mathrm{ac}}(T)$ need not remain bounded as $q\uparrow p$.
	
	Our example shows that the Katznelson--Tzafriri conclusion need not
	extend from power-bounded operators to $p$-absolutely Ces\`aro bounded
	operators, for every finite $p\geq1$. However, since the operator in
	our construction is not positive, it is natural to ask whether
	positivity ensures the conclusion. This raises the following question.

	\begin{q}
		Let $1<p<\infty$ and let $T$ be a positive absolutely
		Ces\`aro bounded operator on $L^p(\Omega)$. Does the condition $\sigma(T)\subseteq\mathbb D\cup\{1\}$
		imply that
		\[
		\lim_{n\to\infty}\|T^{n+1}-T^n\|=0?
		\]
	\end{q}
	
	In this setting, absolute Ces\`aro boundedness is equivalent
	to $p$-absolute Ces\`aro boundedness. Thus the issue is whether
	positivity, together with these orbit estimates, suffices
	to recover the Katznelson--Tzafriri conclusion.
	
	\section*{Declaration of AI assistance}
	
	During the preparation of this work, the author used GPT-6 Sol
	by OpenAI to assist in the development and refinement of certain
	mathematical arguments. The author thoroughly reviewed and verified
	all AI-assisted content, edited it as needed, and takes full
	responsibility for the content of the article.

\end{document}